
\documentclass{amsart}
\usepackage{amssymb}
\usepackage{hyperref}
\usepackage{graphicx}
\usepackage{curves}

\setcounter{MaxMatrixCols}{10}

\unitlength1cm
\newcommand*{\mailto}
[1]{\href{mailto:#1}{\nolinkurl{#1}}}
\newtheorem{theorem}{Theorem}[section]
\newtheorem{lemma}[theorem]{Lemma}
\newtheorem{corollary}[theorem]{Corollary}
\newtheorem{remark}[theorem]{Remark}

\newcommand{\R}{\mathbb{R}}

\newcommand{\be}{\begin{equation}}
\newcommand{\ee}{\end{equation}}
\newcommand{\bea}{\begin{eqnarray}}
\newcommand{\eea}{\end{eqnarray}}

\newcommand{\im}{\mathop{\mathrm{Im}}}

\def\XXint#1#2#3{{\setbox0=\hbox{$#1{#2#3}{\int}$}
		\vcenter{\hbox{$#2#3$}}\kern-.5\wd0}}

\numberwithin{equation}{section}


\typeout{TCILATEX Macros for Scientific Word 4.0 <03 Jan 2002>.}
\typeout{NOTICE:  This macro file is NOT proprietary and may be 
freely copied and distributed.}
\makeatletter

%

%
\newcount\@hour\newcount\@minute\chardef\@x10\chardef\@xv60
\def\tcitime{
\def\@time{%
  \@minute\time\@hour\@minute\divide\@hour\@xv
  \ifnum\@hour<\@x 0\fi\the\@hour:%
  \multiply\@hour\@xv\advance\@minute-\@hour
  \ifnum\@minute<\@x 0\fi\the\@minute
  }}%


\def\x@hyperref#1#2#3{%
   \catcode`\~ = 12
   \catcode`\$ = 12
   \catcode`\_ = 12
   \catcode`\# = 12
   \catcode`\& = 12
   \y@hyperref{#1}{#2}{#3}%
}

\def\y@hyperref#1#2#3#4{%
   #2\ref{#4}#3
   \catcode`\~ = 13
   \catcode`\$ = 3
   \catcode`\_ = 8
   \catcode`\# = 6
   \catcode`\& = 4
}

\@ifundefined{hyperref}{\let\hyperref\x@hyperref}{}
\@ifundefined{msihyperref}{\let\msihyperref\x@hyperref}{}

\@ifundefined{qExtProgCall}{\def\qExtProgCall#1#2#3#4#5#6{\relax}}{}
%
%
%
%
\def\QCTOpt[#1]#2{%
  \def\QCTOptB{#1}
  \def\QCTOptA{#2}
}
\def\QCTNOpt#1{%
  \def\QCTOptA{#1}
  \let\QCTOptB\empty
}
\def\Qct{%
  \@ifnextchar[{%
    \QCTOpt}{\QCTNOpt}
}
\def\QCBOpt[#1]#2{%
  \def\QCBOptB{#1}%
  \def\QCBOptA{#2}%
}
\def\QCBNOpt#1{%
  \def\QCBOptA{#1}%
  \let\QCBOptB\empty
}
\def\Qcb{%
  \@ifnextchar[{%
    \QCBOpt}{\QCBNOpt}%
}
\def\PrepCapArgs{%
  \ifx\QCBOptA\empty
    \ifx\QCTOptA\empty
      {}%
    \else
      \ifx\QCTOptB\empty
        {\QCTOptA}%
      \else
        [\QCTOptB]{\QCTOptA}%
      \fi
    \fi
  \else
    \ifx\QCBOptA\empty
      {}%
    \else
      \ifx\QCBOptB\empty
        {\QCBOptA}%
      \else
        [\QCBOptB]{\QCBOptA}%
      \fi
    \fi
  \fi
}
\newcount\GRAPHICSTYPE
\GRAPHICSTYPE=\z@
\def\GRAPHICSPS#1{%
 \ifcase\GRAPHICSTYPE
   \special{ps: #1}%
 \or
   \special{language "PS", include "#1"}%
 \fi
}%
%
%
%

\def\graffile#1#2#3#4{%
    \bgroup
	   \@inlabelfalse
       \leavevmode
       \@ifundefined{bbl@deactivate}{\def~{\string~}}{\activesoff}%
        \raise -#4 \BOXTHEFRAME{%
           \hbox to #2{\raise #3\hbox to #2{\null #1\hfil}}}%
    \egroup
}%
%
\def\draftbox#1#2#3#4{%
 \leavevmode\raise -#4 \hbox{%
  \frame{\rlap{\protect\tiny #1}\hbox to #2%
   {\vrule height#3 width\z@ depth\z@\hfil}%
  }%
 }%
}%
\newcount\@msidraft
\@msidraft=\z@
\let\nographics=\@msidraft
\newif\ifwasdraft
\wasdraftfalse

\def\GRAPHIC#1#2#3#4#5{%
   \ifnum\@msidraft=\@ne\draftbox{#2}{#3}{#4}{#5}%
   \else\graffile{#1}{#3}{#4}{#5}%
   \fi
}
\def\addtoLaTeXparams#1{%
    \edef\LaTeXparams{\LaTeXparams #1}}%
%

\newif\ifBoxFrame \BoxFramefalse
\newif\ifOverFrame \OverFramefalse
\newif\ifUnderFrame \UnderFramefalse

\def\BOXTHEFRAME#1{%
   \hbox{%
      \ifBoxFrame
         \frame{#1}%
      \else
         {#1}%
      \fi
   }%
}

\def\doFRAMEparams#1{\BoxFramefalse\OverFramefalse\UnderFramefalse\readFRAMEparams#1\end}%
\def\readFRAMEparams#1{%
 \ifx#1\end%
  \let\next=\relax
  \else
  \ifx#1i\dispkind=\z@\fi
  \ifx#1d\dispkind=\@ne\fi
  \ifx#1f\dispkind=\tw@\fi
  \ifx#1t\addtoLaTeXparams{t}\fi
  \ifx#1b\addtoLaTeXparams{b}\fi
  \ifx#1p\addtoLaTeXparams{p}\fi
  \ifx#1h\addtoLaTeXparams{h}\fi
  \ifx#1X\BoxFrametrue\fi
  \ifx#1O\OverFrametrue\fi
  \ifx#1U\UnderFrametrue\fi
  \ifx#1w
    \ifnum\@msidraft=1\wasdrafttrue\else\wasdraftfalse\fi
    \@msidraft=\@ne
  \fi
  \let\next=\readFRAMEparams
  \fi
 \next
 }%
%

\def\IFRAME#1#2#3#4#5#6{%
      \bgroup
      \let\QCTOptA\empty
      \let\QCTOptB\empty
      \let\QCBOptA\empty
      \let\QCBOptB\empty
      #6%
      \parindent=0pt
      \leftskip=0pt
      \rightskip=0pt
      \setbox0=\hbox{\QCBOptA}%
      \@tempdima=#1\relax
      \ifOverFrame
          \typeout{This is not implemented yet}%
          \show\HELP
      \else
         \ifdim\wd0>\@tempdima
            \advance\@tempdima by \@tempdima
            \ifdim\wd0 >\@tempdima
               \setbox1 =\vbox{%
                  \unskip\hbox to \@tempdima{\hfill\GRAPHIC{#5}{#4}{#1}{#2}{#3}\hfill}%
                  \unskip\hbox to \@tempdima{\parbox[b]{\@tempdima}{\QCBOptA}}%
               }%
               \wd1=\@tempdima
            \else
               \textwidth=\wd0
               \setbox1 =\vbox{%
                 \noindent\hbox to \wd0{\hfill\GRAPHIC{#5}{#4}{#1}{#2}{#3}\hfill}\\%
                 \noindent\hbox{\QCBOptA}%
               }%
               \wd1=\wd0
            \fi
         \else
            \ifdim\wd0>0pt
              \hsize=\@tempdima
              \setbox1=\vbox{%
                \unskip\GRAPHIC{#5}{#4}{#1}{#2}{0pt}%
                \break
                \unskip\hbox to \@tempdima{\hfill \QCBOptA\hfill}%
              }%
              \wd1=\@tempdima
           \else
              \hsize=\@tempdima
              \setbox1=\vbox{%
                \unskip\GRAPHIC{#5}{#4}{#1}{#2}{0pt}%
              }%
              \wd1=\@tempdima
           \fi
         \fi
         \@tempdimb=\ht1
         \advance\@tempdimb by -#2
         \advance\@tempdimb by #3
         \leavevmode
         \raise -\@tempdimb \hbox{\box1}%
      \fi
      \egroup%
}%
%
\def\DFRAME#1#2#3#4#5{%
  \hfil\break
  \bgroup
     \leftskip\@flushglue
	 \rightskip\@flushglue
	 \parindent\z@
	 \parfillskip\z@skip
     \let\QCTOptA\empty
     \let\QCTOptB\empty
     \let\QCBOptA\empty
     \let\QCBOptB\empty
	 \vbox\bgroup
        \ifOverFrame 
           #5\QCTOptA\par
        \fi
        \GRAPHIC{#4}{#3}{#1}{#2}{\z@}%
        \ifUnderFrame 
           \break#5\QCBOptA
        \fi
	 \egroup
   \egroup
   \break
}%
%
\def\FFRAME#1#2#3#4#5#6#7{%
  \@ifundefined{floatstyle}
    {
     \begin{figure}[#1]%
    }
    {
	 \ifx#1h
      \begin{figure}[H]%
	 \else
      \begin{figure}[#1]%
	 \fi
	}
  \let\QCTOptA\empty
  \let\QCTOptB\empty
  \let\QCBOptA\empty
  \let\QCBOptB\empty
  \ifOverFrame
    #4
    \ifx\QCTOptA\empty
    \else
      \ifx\QCTOptB\empty
        \caption{\QCTOptA}%
      \else
        \caption[\QCTOptB]{\QCTOptA}%
      \fi
    \fi
    \ifUnderFrame\else
      \label{#5}%
    \fi
  \else
    \UnderFrametrue%
  \fi
  \begin{center}\GRAPHIC{#7}{#6}{#2}{#3}{\z@}\end{center}%
  \ifUnderFrame
    #4
    \ifx\QCBOptA\empty
      \caption{}%
    \else
      \ifx\QCBOptB\empty
        \caption{\QCBOptA}%
      \else
        \caption[\QCBOptB]{\QCBOptA}%
      \fi
    \fi
    \label{#5}%
  \fi
  \end{figure}%
 }%
%
%
%
%
%
\newcount\dispkind%

\def\makeactives{
  \catcode`\"=\active
  \catcode`\;=\active
  \catcode`\:=\active
  \catcode`\'=\active
  \catcode`\~=\active
}
\bgroup
   \makeactives
   \gdef\activesoff{%
      \def"{\string"}%
      \def;{\string;}%
      \def:{\string:}%
      \def'{\string'}%
      \def~{\string~}%
    }
\egroup

\def\FRAME#1#2#3#4#5#6#7#8{%
 \bgroup
 \ifnum\@msidraft=\@ne
   \wasdrafttrue
 \else
   \wasdraftfalse%
 \fi
 \def\LaTeXparams{}%
 \dispkind=\z@
 \def\LaTeXparams{}%
 \doFRAMEparams{#1}%
 \ifnum\dispkind=\z@\IFRAME{#2}{#3}{#4}{#7}{#8}{#5}\else
  \ifnum\dispkind=\@ne\DFRAME{#2}{#3}{#7}{#8}{#5}\else
   \ifnum\dispkind=\tw@
    \edef\@tempa{\noexpand\FFRAME{\LaTeXparams}}%
    \@tempa{#2}{#3}{#5}{#6}{#7}{#8}%
    \fi
   \fi
  \fi
  \ifwasdraft\@msidraft=1\else\@msidraft=0\fi{}%
  \egroup
 }%
%

\def\TEXUX#1{"texux"}

%
%
%
%
%
%
%
\def\limfunc#1{\mathop{\rm #1}}%
\def\func#1{\mathop{\rm #1}\nolimits}%
%

%
\long\def\QQQ#1#2{%
     \long\expandafter\def\csname#1\endcsname{#2}}%
\@ifundefined{QTP}{\def\QTP#1{}}{}
\@ifundefined{QEXCLUDE}{\def\QEXCLUDE#1{}}{}
\@ifundefined{Qlb}{}{}
\@ifundefined{Qlt}{}{}
\long\def\QQA#1#2{}%
\def\QTR#1#2{{\csname#1\endcsname {#2}}}%
\def\EXPAND#1[#2]#3{}%
\def\NOEXPAND#1[#2]#3{}%
\def\LaTeXparent#1{}%
\def\ChildStyles#1{}%
\def\ChildDefaults#1{}%
\def\QTagDef#1#2#3{}%

\@ifundefined{correctchoice}{}{}
\@ifundefined{HTML}{\def\HTML#1{\relax}}{}
\@ifundefined{TCIIcon}{\def\TCIIcon#1#2#3#4{\relax}}{}
\if@compatibility
  \typeout{Not defining UNICODE  U or CustomNote commands for LaTeX 2.09.}
\else
  \providecommand{\UNICODE}[2][]{\protect\rule{.1in}{.1in}}
  \providecommand{\U}[1]{\protect\rule{.1in}{.1in}}
  
\fi

\@ifundefined{lambdabar}{
      
   }{}

%
\@ifundefined{StyleEditBeginDoc}{}{}
%
\def\QQfnmark#1{\footnotemark}

%
%
\@ifundefined{TCIMAKEINDEX}{}{\makeindex}%
%
\@ifundefined{abstract}{%
 \def\abstract{%
  \if@twocolumn
   \section*{Abstract (Not appropriate in this style!)}%
   \else \small 
   \begin{center}{\bf Abstract\vspace{-.5em}\vspace{\z@}}\end{center}%
   \quotation 
   \fi
  }%
 }{%
 }%
\@ifundefined{endabstract}{\def\endabstract
  {\if@twocolumn\else\endquotation\fi}}{}%
\@ifundefined{maketitle}{\def\maketitle#1{}}{}%
\@ifundefined{affiliation}{\def\affiliation#1{}}{}%
\@ifundefined{proof}{}{}%
\@ifundefined{endproof}{}{}%
\@ifundefined{newfield}{\def\newfield#1#2{}}{}%
\@ifundefined{chapter}{\def\chapter#1{\par(Chapter head:)#1\par }%
 \newcount\c@chapter}{}%
\@ifundefined{part}{\def\part#1{\par(Part head:)#1\par }}{}%
\@ifundefined{section}{\def\section#1{\par(Section head:)#1\par }}{}%
\@ifundefined{subsection}{\def\subsection#1%
 {\par(Subsection head:)#1\par }}{}%
\@ifundefined{subsubsection}{\def\subsubsection#1%
 {\par(Subsubsection head:)#1\par }}{}%
\@ifundefined{paragraph}{\def\paragraph#1%
 {\par(Subsubsubsection head:)#1\par }}{}%
\@ifundefined{subparagraph}{\def\subparagraph#1%
 {\par(Subsubsubsubsection head:)#1\par }}{}%
\@ifundefined{therefore}{}{}%
\@ifundefined{backepsilon}{}{}%
\@ifundefined{yen}{}{}%
\@ifundefined{registered}{%
   \def\registered{\relax\ifmmode{}\r@gistered
                    \else$\m@th\r@gistered$\fi}%
 \def\r@gistered{^{\ooalign
  {\hfil\raise.07ex\hbox{$\scriptstyle\rm\text{R}$}\hfil\crcr
  \mathhexbox20D}}}}{}%
\@ifundefined{Eth}{}{}%
\@ifundefined{eth}{}{}%
\@ifundefined{Thorn}{}{}%
\@ifundefined{thorn}{}{}%
%
\@ifundefined{degree}{}{}%
%
\newdimen\theight
\@ifundefined{Column}{\def\Column{%
 \vadjust{\setbox\z@=\hbox{\scriptsize\quad\quad tcol}%
  \theight=\ht\z@\advance\theight by \dp\z@\advance\theight by \lineskip
  \kern -\theight \vbox to \theight{%
   \rightline{\rlap{\box\z@}}%
   \vss
   }%
  }%
 }}{}%
\@ifundefined{qed}{\def\qed{%
 \ifhmode\unskip\nobreak\fi\ifmmode\ifinner\else\hskip5\p@\fi\fi
 \hbox{\hskip5\p@\vrule width4\p@ height6\p@ depth1.5\p@\hskip\p@}%
 }}{}%
\@ifundefined{cents}{}{}%
\@ifundefined{tciLaplace}{}{}%
\@ifundefined{tciFourier}{}{}%
\@ifundefined{textcurrency}{}{}%
\@ifundefined{texteuro}{}{}%
\@ifundefined{textfranc}{}{}%
\@ifundefined{textlira}{}{}%
\@ifundefined{textpeseta}{}{}%
\@ifundefined{miss}{\def\miss{\hbox{\vrule height2\p@ width 2\p@ depth\z@}}}{}%
\@ifundefined{vvert}{}{}
\@ifundefined{tcol}{\def\tcol#1{{\baselineskip=6\p@ \vcenter{#1}} \Column}}{}%
\@ifundefined{dB}{}{}
\@ifundefined{mB}{}{}
\@ifundefined{nB}{}{}
\@ifundefined{note}{}{}%
\def\newfmtname{LaTeX2e}
%
\ifx\fmtname\newfmtname
  \DeclareOldFontCommand{\rm}{\normalfont\rmfamily}{\mathrm}
  \DeclareOldFontCommand{\sf}{\normalfont\sffamily}{\mathsf}
  \DeclareOldFontCommand{\tt}{\normalfont\ttfamily}{\mathtt}
  \DeclareOldFontCommand{\bf}{\normalfont\bfseries}{\mathbf}
  \DeclareOldFontCommand{\it}{\normalfont\itshape}{\mathit}
  \DeclareOldFontCommand{\sl}{\normalfont\slshape}{\@nomath\sl}
  \DeclareOldFontCommand{\sc}{\normalfont\scshape}{\@nomath\sc}
\fi

%

\def\alpha{{\Greekmath 010B}}%
\def\beta{{\Greekmath 010C}}%
\def\gamma{{\Greekmath 010D}}%
\def\delta{{\Greekmath 010E}}%
\def\epsilon{{\Greekmath 010F}}%
\def\zeta{{\Greekmath 0110}}%
\def\eta{{\Greekmath 0111}}%
\def\theta{{\Greekmath 0112}}%
\def\iota{{\Greekmath 0113}}%
\def\kappa{{\Greekmath 0114}}%
\def\lambda{{\Greekmath 0115}}%
\def\mu{{\Greekmath 0116}}%
\def\nu{{\Greekmath 0117}}%
\def\xi{{\Greekmath 0118}}%
\def\pi{{\Greekmath 0119}}%
\def\rho{{\Greekmath 011A}}%
\def\sigma{{\Greekmath 011B}}%
\def\tau{{\Greekmath 011C}}%
\def\upsilon{{\Greekmath 011D}}%
\def\phi{{\Greekmath 011E}}%
\def\chi{{\Greekmath 011F}}%
\def\psi{{\Greekmath 0120}}%
\def\omega{{\Greekmath 0121}}%
\def\varepsilon{{\Greekmath 0122}}%
\def\vartheta{{\Greekmath 0123}}%
\def\varpi{{\Greekmath 0124}}%
\def\varrho{{\Greekmath 0125}}%
\def\varsigma{{\Greekmath 0126}}%
\def\varphi{{\Greekmath 0127}}%

\def\nabla{{\Greekmath 0272}}
\def\FindBoldGroup{%
   {\setbox0=\hbox{$\mathbf{x\global\edef\theboldgroup{\the\mathgroup}}$}}%
}

\def\Greekmath#1#2#3#4{%
    \if@compatibility
        \ifnum\mathgroup=\symbold
           \mathchoice{\mbox{\boldmath$\displaystyle\mathchar"#1#2#3#4$}}%
                      {\mbox{\boldmath$\textstyle\mathchar"#1#2#3#4$}}%
                      {\mbox{\boldmath$\scriptstyle\mathchar"#1#2#3#4$}}%
                      {\mbox{\boldmath$\scriptscriptstyle\mathchar"#1#2#3#4$}}%
        \else
           \mathchar"#1#2#3#4%
        \fi 
    \else 
        \FindBoldGroup
        \ifnum\mathgroup=\theboldgroup 
           \mathchoice{\mbox{\boldmath$\displaystyle\mathchar"#1#2#3#4$}}%
                      {\mbox{\boldmath$\textstyle\mathchar"#1#2#3#4$}}%
                      {\mbox{\boldmath$\scriptstyle\mathchar"#1#2#3#4$}}%
                      {\mbox{\boldmath$\scriptscriptstyle\mathchar"#1#2#3#4$}}%
        \else
           \mathchar"#1#2#3#4%
        \fi     	    
	  \fi}

\newif\ifGreekBold  \GreekBoldfalse
\let\SAVEPBF=\pbf
\def\pbf{\GreekBoldtrue\SAVEPBF}%

\@ifundefined{theorem}{\newtheorem{theorem}{Theorem}}{}
\@ifundefined{lemma}{}{}
\@ifundefined{corollary}{}{}
\@ifundefined{conjecture}{}{}
\@ifundefined{proposition}{}{}
\@ifundefined{axiom}{}{}
\@ifundefined{remark}{\newtheorem{remark}{Remark}}{}
\@ifundefined{example}{}{}
\@ifundefined{exercise}{}{}
\@ifundefined{definition}{}{}

\@ifundefined{mathletters}{%
  \newcounter{equationnumber}  
  \def\mathletters{%
     \addtocounter{equation}{1}
     \edef\@currentlabel{\theequation}%
     \setcounter{equationnumber}{\c@equation}
     \setcounter{equation}{0}%
     \edef\theequation{\@currentlabel\noexpand\alph{equation}}%
  }
  
}{}

\@ifundefined{BibTeX}{%
    \def\BibTeX{{\rm B\kern-.05em{\sc i\kern-.025em b}\kern-.08em
                 T\kern-.1667em\lower.7ex\hbox{E}\kern-.125emX}}}{}%
\@ifundefined{AmS}%
    {\def\AmS{{\protect\usefont{OMS}{cmsy}{m}{n}%
                A\kern-.1667em\lower.5ex\hbox{M}\kern-.125emS}}}{}%
\@ifundefined{AmSTeX}{}{}%
%

\def\@@eqncr{\let\@tempa\relax
    \ifcase\@eqcnt \def\@tempa{& & &}\or \def\@tempa{& &}%
      \else \def\@tempa{&}\fi
     \@tempa
     \if@eqnsw
        \iftag@
           \@taggnum
        \else
           \@eqnnum\stepcounter{equation}%
        \fi
     \fi
     \global\tag@false
     \global\@eqnswtrue
     \global\@eqcnt\z@\cr}

\def\TCItag{\@ifnextchar*{\@TCItagstar}{\@TCItag}}
\def\@TCItag#1{%
    \global\tag@true
    \global\def\@taggnum{(#1)}}
\def\@TCItagstar*#1{%
    \global\tag@true
    \global\def\@taggnum{#1}}
%
%
%
%
%
%
%
%
%
%
%
%
%
%
%
%
%
%
%
%
%
%
%
%
%
%
%
%
%
%
%
%
%
%
%
%
%
%
%
%
%
%
%
%
%
%
%
%
%
%
%
%
%
%
%
%
%
%

\RequirePackage{amsmath}
\makeatother

\begin{document}
\title[KdV equation]{A new asymptotic regime for the KdV equation with
Wigner-von Neumann type initial data}
\author{Alexei Rybkin}
\address{Department of Mathematics and Statistics, University of Alaska
Fairbanks, PO Box 756660, Fairbanks, AK 99775}
\email{arybkin@alaska.edu}
\thanks{The author is supported in part by the NSF under grant DMS-2307774.}
\date{September 2025}
\subjclass{34B20, 37K15, 47B35}
\keywords{Inverse scattering transform, KdV equation, spectral singularity,
Wigner-von Neumann potential.}

\begin{abstract}
We investigate the long-time asymptotic behavior of solutions to the Cauchy
problem for the KdV equation, focusing on the evolution of the radiant wave
associated with a Wigner-von Neumann (WvN) resonance induced by the initial
data (potential). A WvN resonance refers to an energy level where the
potential exhibits zero transmission (complete reflection). The
corresponding Jost solution at such energy becomes singular, and in the NLS
context, this is referred to as a spectral singularity. A WvN resonance
represents a long-range phenomenon, often introducing significant
challenges, such as an infinite negative spectrum, when employing the
inverse scattering transform (IST). To avoid some of these issues, we
consider a restricted class of initial data that generates a WvN resonance
but for which the IST framework can be suitably adapted. For this class of
potentials, we demonstrate that each WvN resonance produces a distinct
asymptotic regime---termed the resonance regime---characterized by a slower
decay rate for large time compared to the radiant waves associated with
short-range initial data.
\end{abstract}

\maketitle

\section{Introduction}

We are concerned with extension of the powerful nonlinear steepest descent
method due to Deift and Zhou \cite{DeiftZhou1993} (aka the Riemann-Hilbert
problem approach) for the study of the long-time behavior of the solution to
the Cauchy problem for the Korteweg-de Vries (KdV) equation (see e.g. \cite%
{AC91},\cite{NovikovetalBook})%
\begin{equation}
\partial _{t}q-6q\partial _{x}q+\partial _{x}^{3}q=0,  \label{KdV}
\end{equation}%
\begin{equation}
q\left( x,0\right) =q\left( x\right) ,  \label{KdVID}
\end{equation}%
with certain slowly decaying (long-range) initial profiles $q$. Note that 
\cite{DeiftZhou1993} deals with the modified KdV (mKdV) for $q\left(
x\right) $ from the Schwarz class, which of course assumes rapid decay. A
comprehensive treatment of the KdV case is offered in \cite{GT09} where
decay and smoothness assumptions are relaxed to%
\begin{equation}
\int_{\mathbb{R}}\left( 1+\left\vert x\right\vert \right) \left( \left\vert
q\left( x\right) \right\vert +\left\vert q^{\prime }\left( x\right)
\right\vert +\left\vert q^{\prime \prime }\left( x\right) \right\vert
+\left\vert q^{\prime \prime \prime }\left( x\right) \right\vert \right) 
\mathrm{d}x<\infty \text{.}  \label{sr}
\end{equation}

It seems plausible (but we do not have any reference)\ that conditions on
the derivatives may be further relaxed at the expense of technical
complications but the condition

\begin{equation}
\int_{\mathbb{R}}\left( 1+\left\vert x\right\vert \right) \left\vert q\left(
x\right) \right\vert \mathrm{d}x<\infty\text{ \ (short-range)}  \label{sr 1}
\end{equation}
is crucially important to run the underlying inverse scattering transform\
(IST) (see e.g. \cite{March86}). In our \cite{GruRybSIMA15} we extend the
IST to $q$'s with essentially arbitrary behavior at $-\infty$ \footnote{%
Which means that (\ref{KdV}) does not require a boundary condition at $%
-\infty$ .} but still require a sufficiently fast decay\ at $+\infty$. This
should not come as a surprise since the KdV is a strongly unidirectional
equation (solitons run to the right and radiation waves run to the left)
which should translate into different contributions from the behavior of the
data $q$ at $\pm\infty$. We emphasize that relaxation of the condition (\ref%
{sr 1}) leads to a multitude of serious complications that can be resolved
only in some particular cases and not surprisingly long-time asymptotic
behavior of the corresponding KdV solutions is out of reach in general. Our
goal is to shed some light on this difficult situation but we need first to
fix our terminology.

Associate with $q$ the full line (self-adjoint) Schrodinger operator $%
\mathbb{L}_{q}=-\partial _{x}^{2}+q(x)$. For its spectrum $\sigma \left( 
\mathbb{L}_{q}\right) $ we have%
\begin{equation*}
\sigma \left( \mathbb{L}_{q}\right) =\sigma _{\mathrm{d}}\left( \mathbb{L}%
_{q}\right) \cup \sigma _{\mathrm{ac}}\left( \mathbb{L}_{q}\right) \text{,}
\end{equation*}%
where the discrete component $\sigma _{\mathrm{d}}\left( \mathbb{L}%
_{q}\right) =\{-\kappa _{n}^{2}\}$ is finite and for the absolutely
continuous one has $\sigma _{\mathrm{ac}}\left( \mathbb{L}_{q}\right)
=[0,\infty )$. There is no positive singular continuous spectrum. The Schr{%
\"{o}}dinger equation 
\begin{equation}
\mathbb{L}_{q}\psi =k^{2}\psi  \label{SE}
\end{equation}%
has two (linearly independent) Jost solutions $\psi ^{\left( \pm \right)
}(x,k)$, i.e. solutions satisfying%
\begin{equation}
\psi ^{\left( \pm \right) }(x,k)\sim \mathrm{e}^{\pm \mathrm{i}%
kx},\;\partial _{x}\psi ^{\left( \pm \right) }(x,k)\sim \pm \mathrm{i}k\psi
^{\left( \pm \right) }(x,k),\text{\ \ }x\rightarrow \pm \infty ,
\label{Jost solutions}
\end{equation}%
that are analytic in the upper half-plane and continuous to the real line.
Since $q$ is real, $\overline{\psi ^{\left( +\right) }}$ also solves (\ref%
{SE}) and one can easily see that the pair $\{\psi ^{\left( +\right) },%
\overline{\psi ^{\left( +\right) }}\}$ forms a fundamental set for (\ref{SE}%
). Hence $\psi ^{\left( -\right) }$ is a linear combination of $\{\psi
^{\left( +\right) },\overline{\psi ^{\left( +\right) }}\}$. We write this
fact as follows ($k\in \mathbb{R}$)%
\begin{equation}
T(k)\psi ^{\left( -\right) }(x,k)=\overline{\psi ^{\left( +\right) }(x,k)}%
+R(k)\psi ^{\left( +\right) }(x,k),\text{ (basic scattering identity)}
\label{R basic scatt identity}
\end{equation}%
where $T$ and $R$ are called the\emph{\ transmission} and (right)\emph{\
reflection} coefficient respectively. The identity (\ref{R basic scatt
identity}) is totally elementary but serves as a basis for inverse
scattering theory. As is well-known (see, e.g. \cite{March86}), the triple%
\begin{equation}
S_{q}=\{R,(\kappa _{n},c_{n})\},  \label{SD}
\end{equation}%
where $c_{n}=\left\Vert \psi ^{\left( +\right) }(\cdot ,\mathrm{i}\kappa
_{n})\right\Vert ^{-1}$, determines $q$ uniquely and is called the
scattering data for $\mathbb{L}_{q}$ .

Among initial data (aka potentials) that are not subject to (\ref{sr})
Wigner-von Neumann (WvN) type potentials%
\begin{equation}
q\left( x\right) =(A/x)\sin \left( 2\omega x+\delta \right) +O\left(
x^{-2}\right) ,x\rightarrow \pm \infty ,  \label{pure WvN}
\end{equation}%
where $A,\omega ,\delta $ are real constants, play a particularly important
role due to their physical relevance. Note that $q$'s satisfying $\left( \ref%
{pure WvN}\right) $ are in $L^{2}$ and therefore, due to the famous
Bourgain's result \cite{Bourgain93}, the Cauchy problem for the KdV equation
(\ref{KdV})-(\ref{KdVID}) is well-posed. Moreover, the scattering theory can
be developed along the same lines with its short-range counterpart except.
The main feature of WvN type potentials is the important fact that if $%
\gamma =\left\vert A/\left( 4\omega \right) \right\vert >1/2$ then for some
boundary condition at $x=0$ the half-line Schrodinger operator with a
potential of the form $\left( \ref{pure WvN}\right) $ has a positive bound
state at energy $\omega ^{2}$. An explicit example to this effect was
constructed in the seminal paper by Wigner-von Neumann \cite{WvN1929}.
Certain $q$'s subject to $\left( \ref{pure WvN}\right) $ support a positive
(embedded) bound state in the full line context (see e.g. \cite{RybCMP23}
and the literature cited therein) but it is never the case for potentials
restricted to a half-line.\ If $\omega ^{2}$ is not an embedded bound state
then the energy $\omega ^{2}$ is commonly referred as to a \emph{WvN
resonance. }It is shown in \cite{Klaus91} that the Jost solutions $\psi
^{\left( \pm \right) }(x,k)$ blow up to the order $\gamma =\left\vert
A/\left( 4\omega \right) \right\vert $ at $\pm \omega $ and for this reason $%
\omega $ could be called a \emph{spectral singularity} of order $\gamma $,
the term that is more common in the soliton community. At such a point, $%
T\left( \pm \omega \right) =0$ and therefore \cite{March86} one of the
(necessary) condition guaranteeing that the triple (\ref{SD}) is indeed
scattering data fails. Our interest in WvN potentials is inspired in part by
the work of Matveev (see \cite{Mat02} and the literature cited therein) and
his proposal \cite{MatveevOpenProblems}: "A very interesting unsolved
problem is to study the large time behavior of the solutions to the KdV
equation corresponding to the smooth initial data like $cx^{-1}\sin 2kx$, $%
c\in \mathbb{R}$. Depending on the choice of the constant $c$ the related
Schr{\"{o}}dinger operator might have finite or infinite or zero number of
the negative eigenvalues. The related inverse scattering problem is not yet
solved and the study of the related large times evolution is a very
challenging problem." Note that what Matveev says about the negative
spectrum is in fact an old result by Klaus \cite{Klaus82}: the potential%
\begin{equation*}
q_{\gamma }=(A/x)\sin \left( 2\omega x+\delta \right)
\end{equation*}%
has finite negative spectrum if $\gamma =\left\vert A/\left( 4\omega \right)
\right\vert <\sqrt{1/2}$ but if $\gamma \geq \sqrt{1/2}$ the negative
spectrum (necessarily discrete) is infinite (accumulating to zero). Thus, if 
$\gamma \geq \sqrt{1/2}$ then the triple (\ref{SD}) also should include
infinitely many arbitrary norming constants $c_{n}$ (Recall that a norming
constant determines a soliton's initial location).

While the Matveev problem is still a long shot, we have some partial results
on the inverse scattering problem. Namely, in \cite{GruRybNON22} we show
that if $\gamma $ is small enough (small coupling constant) the problem (\ref%
{KdV})-(\ref{KdVID}) can be solved by the IST (in fact, for a linear
combination of WvN potentials). However, this good news for small WvN
potentials does not yet imply applicability of the nonlinear steepest
descent, which Matveev refers to as a "very challenging problem". The main
issue here is that WvN resonances cause the underlying Riemann-Hilbert
problem (RHP) to become singular (and the standard conjugation step of RHP
fails). Another issue is a poor understanding (as opposed to the classical
case) of smoothness properties of $R$ (at the origin in particular) which
complicates the contour deformation step in RHP.

In this contribution we solely focus on the effect of WvN resonances on the
KdV solution. We show that a WvN resonance gives rise to a new large-time
asymptotic region $x\sim -12\omega ^{2}t$ as $t\rightarrow +\infty $, which
we call the \emph{resonance regime}. To show this we adapt the nonlinear
steepest descent to handle a certain singular RHP. In other words, we find a
class of explicit initial data that produces one WvN resonance and is free
of the IST issues associated with the long range nature of our initial data.

Our approach rests on \cite{GT09}, \cite{kt}, and \cite{Budylin20}. We start
out from the standard well-posed vector RHP, which we conjugate with the
partial transmission coefficient. The latter lets us deform the conjugated
RHP to the one with the jump matrix that is exponentially close to the
identity matrix away from $\pm \omega $ (in fact, without loss of
generality, we can set $\omega =1$). This, in turn, allows us to reduce the
original RHP to the one on small crosses centered at $\pm \omega $, which
can be decoupled into two RHP at $+\omega $ and $-\omega $. The solution to
each of them reduces to the one centered at $0$. We then replace the entries
of the jump matrix on the small cross with their behaviors at $0$ followed
by extending the cross to infinity. So far we have followed \cite{GT09}, 
\cite{kt} with no change. However, since the partial transmission
coefficient is unbounded at $\pm \omega $, the jump matrix of our RHP on the
cross is unbounded (in \cite{GT09}, \cite{kt} it is bounded). It takes only
a simple adjustment to the conjugation step from \cite{GT09}, \cite{kt} to
reduce our singular RHP back to a matrix\footnote{%
At this point it is more convenient to go over to the matrix RHP.} RHP on
the real line, which we call the model matrix RHP. As opposed to the
classical situation of \cite{GT09}, \cite{kt}, the jump matrix of our model
RHP is not constant (and not even bounded) and cannot be explicitly solved
in terms of parabolic cylinder functions. The classical approach however
admits a modification which is suitably done in \cite{Budylin20}\footnote{%
Note that this paper does not use or even mention \cite{GT09}, \cite{kt}.}.
The results of \cite{Budylin20} then yield the asymptotic of the solution to
our model RHP (for large spectral variable), which is what is required to
complete the derivation of our asymptotic formula (Theorem \ref{Thm on WvN})
following again \cite{GT09}.

Since it is a case study, we do not give here full detail leaving this for a
future publication where our rather narrow class of potentials will be
placed in a much broader class.

The paper is organized as follows. In Section \ref{Sec Notation} we fix our
terminology and introduce our main ingredients. In Section \ref{sec Main
results} we state our main results. Section \ref{sec original RHP} is
devoted to introducing our original RHP. In Section \ref{sec Partial T} we
introduce the partial transmission coefficient and find its asymptotics
around critical points that are crucially important to what follows. Section %
\ref{sec:condef} is devoted to standard conjugation and deformation of our
original RHP. In Section \ref{sec Matrix RHP} we reduce our deformed RHP to
a new matrix RHP on two small crosses centered at the critical points. In
Section \ref{sec asym of B's} we find asymptotics at the critical points of
the jump matrix which allows us to reduce the RHP\ to a matrix one with the
jump matrix on a single but infinite cross in Section \ref{sec RHP on a
cross}. Section \ref{sec model RHP} deals with a second conjugation step
where we transform the problem on a cross back to the problem on the real
line. In Section \ref{sec add RHP} we solve the new problem on the real line
by transforming it to an additive RHP. In Section \ref{sec proof} we finally
prove Theorem \ref{Thm on WvN}. In the final section \ref{sec concl} we make
some remarks on what we have not done.

\section{Notation and Auxiliaries\label{Sec Notation}}

We follow standard notation accepted in complex analysis: $\mathbb{C}$ is
the complex plane, $\mathbb{C}^{\pm}=\left\{ z\in\mathbb{C}:\pm\limfunc{Im}%
z>0\right\} $, $\overline{z}$ is the complex conjugate of $z$.

Matrices (including rows and columns) are denoted by boldface letters;
low/upper case being reserved for $2\times 1$ (row vector) and $2\times 2$
matrix respectively with an exception for Pauli matrices which are
traditionally denoted by%
\begin{equation*}
\sigma _{1}=\left( 
\begin{array}{cc}
0 & 1 \\ 
1 & 0%
\end{array}%
\right) ,\ \ \ \sigma _{3}=\left( 
\begin{array}{cc}
1 & 0 \\ 
0 & -1%
\end{array}%
\right) .
\end{equation*}

We write $f\left( z\right) \sim g\left( z\right) ,\ $as $z\rightarrow z_{0}%
\text{,}$ if $\lim\left( f\left( z\right) -g\left( z\right) \right) =0,~$as $%
z\rightarrow z_{0}$. We use the standard big O notation when the rate needs
to be specified.

$\log z$ is always defined with a cut along $\left( -\infty ,0\right) $.
That is, $\func{Im}\log z=\arg z\in (-\pi ,\pi ]$.

Given an oriented contour $\Gamma $, by $z_{+}$ ($z_{-}$)\ we denote the
positive (negative) side of $\Gamma $. Recall that the positive (negative)
side is the one that lies to the left (right) as one traverses the contour
in the direction of the orientation. For a function $f\left( z\right) $
analytic in $\mathbb{C}\setminus \Gamma $, by $f_{+}(z),z\in \Gamma $, we
denote the angular limit from above and by $f_{-}(z)$ the one from below. In
particular, if $\Gamma =\mathbb{R}$ then $f_{\pm }\left( x\right) =f\left(
x\pm \mathrm{i}0\right) =\lim_{\varepsilon \rightarrow +0}f\left( x\pm 
\mathrm{i}\varepsilon \right) ,x\in \mathbb{R}$.

Finally, $\mathrm{1}_{S}$ is the characteristic function of a real set $S$.

\section{Our class of initial data and main results\label{sec Main results}}

Our construction of initial data is based upon \cite{RyNON21}. Consider the
potential (it is not the one we will deal with yet)%
\begin{equation}
q_{0}\left( x\right) =\left\{ 
\begin{array}{cc}
-2\dfrac{\mathrm{d}^{2}}{\mathrm{d}x^{2}}\log \left\{ 1+a\left( \sin
2x-2x\right) \right\} , & x<0 \\ 
0, & x\geq 0%
\end{array}%
\right. ,  \label{our Q}
\end{equation}%
where $a$ is a positive number. One can easily see that the function $q_{0}$
is continuous and $q_{0}\left( 0\right) =0$ but $q_{0}^{\prime }\left(
x\right) $ has a jump discontinuity at $x=0$. Moreover,%
\begin{equation}
q_{0}\left( x\right) =-4\ \dfrac{\sin 2x}{x}+O\left( \frac{1}{x^{2}}\right)
,\ \ x\rightarrow -\infty .  \label{Q asym}
\end{equation}%
Note that the leading term is independent of $a$. Apparently, $q_{0}\in
L^{2}\left( \mathbb{R}\right) $ but not even in $L^{1}\left( \mathbb{R}%
\right) $. Thus, $q_{0}$ is not short-range. Also note that%
\begin{equation*}
\int_{-\infty }^{\infty }q_{0}\left( x\right) \mathrm{d}x=0.
\end{equation*}%
The main feature of $q_{0}$ is that $\mathbb{L}_{q_{0}}$ admits an explicit
spectral and scattering theory. The Schr\"{o}dinger operator $\mathbb{L}%
_{q_{0}}$ on $L^{2}\left( \mathbb{R}\right) $ with $q_{0}$ given by (\ref%
{our Q}) has the following properties:

\begin{itemize}
\item The spectrum $\sigma \left( \mathbb{L}_{q_{0}}\right) =\left\{ -\kappa
^{2}\right\} \cup \sigma _{\mathrm{ac}}\left( \mathbb{L}_{q_{0}}\right) $
and $\sigma _{\mathrm{ac}}\left( \mathbb{L}_{q_{0}}\right) $ is two fold
purely absolutely purely continuous filling $[0,\infty )$. Here $\kappa >0$
solves the equation 
\begin{equation}
\kappa ^{3}+\kappa =2a.  \label{a}
\end{equation}

\item The left Jost solution $\psi \left( x,k\right) $ given by%
\begin{equation}
\psi \left( x,k\right) =\mathrm{e}^{-\mathrm{i}kx}\left\{ 1+\left( \frac{%
\mathrm{e}^{\mathrm{i}x}}{k-1}-\frac{\mathrm{e}^{-\mathrm{i}x}}{k+1}\right) 
\frac{2a\sin x}{1-2ax+a\sin 2x}\right\} ,\ \ \ x<0,  \label{left jost}
\end{equation}%
is a meromorphic function with two simple poles at $\pm 1$ (therefore $\pm 1$
are spectral singularities of order 1).

\item The (right) reflection coefficient $R^{0}$ and the transmission
coefficient $T^{0}$ are rational functions given by%
\begin{equation}
T^{0}\left( k\right) =\frac{k^{3}-k}{k^{3}-k+2\mathrm{i}a},\ \ R^{0}\left(
k\right) =\frac{-2\mathrm{i}a}{k^{3}-k+2\mathrm{i}a},
\end{equation}
\end{itemize}

It is convenient to remove the negative bound state $-\kappa ^{2}$ by
applying a single Darboux transformation (see e.g. \cite{Deift79}). This way
one gets a new potential%
\begin{equation*}
q\left( x\right) =q_{0}\left( x\right) -2\dfrac{\mathrm{d}^{2}}{\mathrm{d}%
x^{2}}\log \psi \left( x,\mathrm{i}\kappa \right) ,
\end{equation*}%
where $\psi $ is given by (\ref{left jost}). Since the single Darboux
transformation preserves support, $q\left( x\right) $ can be simplified to
read%
\begin{equation}
q\left( x\right) =\left\{ 
\begin{array}{cc}
-2\dfrac{\mathrm{d}^{2}}{\mathrm{d}x^{2}}\log \left\{ 1-\left( \kappa
^{3}+\kappa \right) x+\left( \kappa ^{3}-\kappa \right) \sin x\cos x+2\kappa
^{2}\sin ^{2}x\right\} , & x<0 \\ 
0, & x\geq 0%
\end{array}%
\right. ,  \label{our data}
\end{equation}%
where $\kappa $ is a positive number related to $a$ by $\left( \ref{a}%
\right) $. The scattering quantities transform as follows%
\begin{equation*}
T\left( k\right) =\frac{k-\mathrm{i}\kappa }{k+\mathrm{i}\kappa }T^{0}\left(
k\right) ,\ \ \ R\left( k\right) =-\frac{k-\mathrm{i}\kappa }{k+\mathrm{i}%
\kappa }R^{0}\left( k\right) .
\end{equation*}

For the reader's convenience we summarize important properties of $q$'s in
the following

\begin{theorem}
\label{Thm on Q}The Schr\"{o}dinger operator $\mathbb{L}_{q}$ on $%
L^{2}\left( \mathbb{R}\right) $ with $q$ given by (\ref{our Q}) has the
properties:

\begin{enumerate}
\item (Spectrum) $\sigma \left( \mathbb{L}_{q}\right) =\sigma _{\mathrm{ac}%
}\left( \mathbb{L}_{q}\right) $ $=[0,\infty )$ and multiplicity two. The
point $1$ is a first order WvN resonance.

\item (Scattering quantities) The transmission coefficient $T$ and the
(right) reflection coefficient $R$ are rational functions given by%
\begin{equation}
\begin{array}{ccccc}
T\left( k\right) & = & \dfrac{k-\mathrm{i}\kappa }{k+\mathrm{i}\kappa }%
\dfrac{k^{3}-k}{k^{3}-k+2\mathrm{i}a} & = & \dfrac{k^{3}-k}{\left( k+\mathrm{%
i}\kappa \right) \left( k-k_{-}\right) \left( k-k_{+}\right) } \\ 
R\left( k\right) & = & \dfrac{k-\mathrm{i}\kappa }{k+\mathrm{i}\kappa }%
\dfrac{2\mathrm{i}a}{k^{3}-k+2\mathrm{i}a} & = & \dfrac{\mathrm{i}\left(
\kappa ^{3}+\kappa \right) }{\left( k+\mathrm{i}\kappa \right) \left(
k-k_{-}\right) \left( k-k_{+}\right) }%
\end{array}%
,  \label{TR}
\end{equation}%
where%
\begin{equation*}
k_{\pm }=\pm \sqrt{1+3\kappa ^{2}/4}-\mathrm{i}\kappa /2.
\end{equation*}

\item (Asymptotic behavior) $q\left( x\right) $ has the asymptotic behavior%
\begin{equation}
q\left( x\right) =-4\ \dfrac{\sin\left( 2x+\delta\right) }{x}+O\left( \frac{1%
}{x^{2}}\right) ,\ \ x\rightarrow-\infty,  \label{q behavior}
\end{equation}
where%
\begin{equation*}
\tan\delta=\frac{2\kappa}{1-\kappa^{2}}.
\end{equation*}
\end{enumerate}
\end{theorem}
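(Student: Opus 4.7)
The plan is to derive the three claims from the explicit formula (\ref{our data}) for $q$, which is obtained from $q_0$ via the single Darboux transformation $q = q_0 - 2(\log \psi(\cdot, \mathrm{i}\kappa))''$ built on the negative eigenfunction at $-\kappa^2$. All three items then follow from the already-recorded properties of $\mathbb{L}_{q_0}$ plus the well-known transformation rules under a single Darboux step.

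For item~(2), the scattering coefficients transform by the classical rules $T(k) = \tfrac{k-\mathrm{i}\kappa}{k+\mathrm{i}\kappa}\,T^{0}(k)$ and $R(k) = -\tfrac{k-\mathrm{i}\kappa}{k+\mathrm{i}\kappa}\,R^{0}(k)$ stated just before the theorem. The key algebraic step is to factor the denominator $k^3 - k + 2\mathrm{i}a$: using (\ref{a}) one verifies directly that $\mathrm{i}\kappa$ is a root, and polynomial division yields $k^3 - k + 2\mathrm{i}a = (k - \mathrm{i}\kappa)(k^2 + \mathrm{i}\kappa k - 1 - \kappa^2)$. Solving the quadratic factor gives $k_\pm = \pm\sqrt{1 + 3\kappa^2/4} - \mathrm{i}\kappa/2$. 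Substituting back, the $(k-\mathrm{i}\kappa)$ factor introduced by the Darboux step cancels the same factor in the denominator, and using $2a = \kappa^3 + \kappa$ in the numerator of $R$ produces exactly (\ref{TR}).

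For item~(1), a single Darboux transformation preserves the AC spectrum together with its multiplicity, so $\sigma_{\mathrm{ac}}(\mathbb{L}_q) = [0,\infty)$ with multiplicity two as for $\mathbb{L}_{q_0}$. The unique negative eigenvalue $-\kappa^2$ of $\mathbb{L}_{q_0}$ is removed because the transformation is built on the corresponding eigenfunction, so $\sigma(\mathbb{L}_q) = [0,\infty)$. That $k=1$ is a first-order WvN resonance is now read off from (\ref{TR}): $T$ has a simple zero at $k=1$, which by the standard classification (see \cite{Klaus91}) means the Jost solutions have first-order poles there. The same conclusion is visible directly from the simple pole of the explicit Jost solution in (\ref{left jost}), which survives the Darboux step.

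For item~(3), I would compute $q(x)$ for large negative $x$ directly from (\ref{our data}). Setting $\alpha = \kappa^3 + \kappa$ and writing the argument of the logarithm as $F(x) = -\alpha x + H(x)$ with $H$ a bounded function of $x$, one has $\log F = \log(-\alpha x) + \log(1 - H/(\alpha x))$; Taylor-expanding the second logarithm and differentiating twice yields
\begin{equation*}
(\log F)''(x) = -\frac{H''(x)}{\alpha x} + O(x^{-2}), \qquad x \to -\infty.
\end{equation*}
Via the double-angle identities, $H''(x)$ becomes a linear combination of $\sin 2x$ and $\cos 2x$, which by the identity $(1-\kappa^2)\sin 2x + 2\kappa\cos 2x = (1+\kappa^2)\sin(2x+\delta)$, with $\tan\delta = 2\kappa/(1-\kappa^2)$, collapses to a single sinusoid. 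After dividing by $\alpha = \kappa(1+\kappa^2)$ and multiplying by the prefactor $-2$, the $\kappa$-dependent factors cancel and (\ref{q behavior}) follows. The main obstacle is precisely this bookkeeping in item~(3): arranging for all $\kappa$-factors to collapse so that the amplitude is exactly $4$, and pinning down the branch of $\delta$ modulo $\pi$ to reproduce the sign in (\ref{q behavior}); the computation is elementary and introduces no new ideas.
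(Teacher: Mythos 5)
Your proposal is correct and follows essentially the same route the paper takes: the paper states Theorem \ref{Thm on Q} without a separate proof, as a summary of the listed properties of $\mathbb{L}_{q_{0}}$ combined with the standard single-Darboux transformation rules, which is exactly your argument (and your factorization $k^{3}-k+2\mathrm{i}a=(k-\mathrm{i}\kappa)(k-k_{-})(k-k_{+})$ and the direct expansion of $-2(\log F)''$ check out). The only loose end you already flag yourself — the sign of the amplitude in (\ref{q behavior}) versus the branch of $\delta$, which is determined only mod $\pi$ by $\tan\delta=2\kappa/(1-\kappa^{2})$ — is not resolved in the paper either.
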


\begin{remark}
Observe that since the left $L$ and right $R$ reflection coefficient are
related by $L=-\left( T/\overline{T}\right) \overline{R}$, one readily sees
from (\ref{TR}) that $L=R$. Note the potential $q\left( -x\right) $ has the
same transition coefficients $\left( \ref{TR}\right) $ and hence $q\left(
x\right) $ and $q\left( -x\right) $ share the same scattering matrix meaning
that the set $S_{q}=\left\{ R\right\} $ does not make up scattering data.
This phenomenon was first observed in \cite{ADM81} for certain potentials
decaying as $O\left( x^{-2}\right) $. We mention that the construction in 
\cite{ADM81} involves a delta-function whereas our counterexample is given
by a continuous function. Note that the uniqueness is restored once we a
priori assume that $q\left( x\right) $ is supported on $\left( -\infty
,0\right) $.
\end{remark}

Note that $\gamma=1$ for any $q$ of the form (\ref{our data}) and thus the
results of \cite{RyNON21} do not apply as we need $\gamma<1/2$. However
since the support of each $q$ is restricted to $\left( -\infty,0\right) $
the IST does apply \cite{RybNON2010} and, as in the classical IST, the time
evolved scattering data for the whole class is given by%
\begin{equation}
S_{q}(t)=\left\{ R(k)\exp\left( 8\mathrm{i}k^{3}t\right) \right\} .
\label{time evol}
\end{equation}
The general theory \cite{RybNON2010} implies that $q\left( x,t\right) $ is
meromorphic in $x$ on the whole complex plane for any $t>0$. Moreover, for
each fixed $t>0$, $q\left( x,t\right) $ decays exponentially as $%
x\rightarrow+\infty$ and behaves as $\left( \ref{q behavior}\right) $ at $%
-\infty$ \cite{NovikovKhenkin84}. We now state our main result.

\begin{theorem}[On resonance asymptotic regime]
\label{Thm on WvN} Let $q(x)$ be of the form (\ref{our data}) with some
positive $\kappa $. Then the solution $q\left( x,t\right) $ to the KdV
equation with the initial data $q\left( x\right) $ behaves along the line $%
x=-12t$ as follows%
\begin{eqnarray}
\left. q\left( x,t\right) \right\vert _{x=-12t} &\sim &\sqrt{\frac{4\nu
\left( t\right) }{3t}}\sin \left\{ 16t-\nu \left( t\right) \left[ 1+\pi \nu
\left( t\right) -\log \nu \left( t\right) \right] +\delta \right\} ,  \notag
\\
t &\rightarrow &+\infty ,  \label{eq res asym}
\end{eqnarray}%
where%
\begin{equation*}
\nu \left( t\right) =\left( 1/2\pi \right) \log \left( 12\left( \kappa
^{3}+\kappa \right) ^{2}t\right) ,
\end{equation*}%
and a constant phase $\delta $ is given by%
\begin{align*}
\delta & =\frac{2}{\pi }\int\limits_{0}^{1}\log \left\vert \frac{%
T(s)/T^{\prime }\left( 1\right) }{1-s}\right\vert ^{2}\frac{\mathrm{d}s}{%
1-s^{2}}-\frac{2}{\pi }\int\limits_{0}^{1}\frac{\log s{\mathrm{d}s}}{s-2}. \\
& +\frac{1}{\pi }\log \frac{\kappa ^{3}+\kappa }{2}\log \frac{\kappa
^{3}+\kappa }{8}+\arctan \frac{2\kappa }{1-\kappa ^{2}}-\frac{\pi }{3}.
\end{align*}
\end{theorem}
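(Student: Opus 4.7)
The plan is to follow the Deift-Zhou nonlinear steepest descent method of \cite{GT09,kt}, modified at the key step by the asymptotic construction of \cite{Budylin20} to deal with the singularity of the jump matrix produced by the WvN resonances at $k=\pm 1$. First I would write the standard vector Riemann-Hilbert problem associated with the time-evolved scattering data (\ref{time evol}); no discrete spectrum contributes, since $\sigma(\mathbb{L}_q)=[0,\infty)$ by Theorem \ref{Thm on Q}(i). Along the ray $x=-12t$ the phase $\Phi(k)=8k^3+2k(x/t)$ has stationary points at $k=\pm 1$, which are exactly the zeros of $T(k)$ and the locations of the WvN resonances. This collision between the stationary phase points and the spectral singularities is the mechanism that creates the new asymptotic regime.

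Next I would conjugate the RHP by a partial transmission coefficient built from $\log(1-|R|^2)$ restricted to $(-1,1)$, and deform the contour in the standard way so that the resulting jump matrix is exponentially close to the identity away from small neighborhoods of $\pm 1$. Because $T(\pm 1)=0$ by Theorem \ref{Thm on Q}(ii), this partial transmission coefficient has (non-integrable) singularities at the critical points; its precise asymptotics, computed from the explicit formula (\ref{TR}), will expose the constants $\kappa^3+\kappa$ and the logarithmic scale $\nu(t)=(2\pi)^{-1}\log(12(\kappa^3+\kappa)^2 t)$ that appear in (\ref{eq res asym}). Localizing to small crosses around $\pm 1$ and exploiting the $k\mapsto -k$ symmetry (forced by realness of $q$) reduces the problem to a single local RHP on an infinite cross through the origin.

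The main obstacle is the resolution of this local model. In the short-range case treated in \cite{GT09,kt} the cross jump matrix is bounded and the model is explicitly solvable by parabolic cylinder functions. Here the unboundedness of the partial transmission coefficient at $\pm 1$ forces the cross jump matrix to grow, and a further conjugation is required to transfer the singular cross problem to a matrix RHP on $\mathbb{R}$ whose jump matrix depends nontrivially on $\nu(t)$ and is no longer solvable in closed form. I would invoke the asymptotic construction of \cite{Budylin20}, which is designed for exactly this situation; it yields the leading-order behavior of the matrix solution for large spectral variable, from which one reads off both the amplitude $\sqrt{4\nu(t)/(3t)}$ (slower than the $t^{-1/2}$ of the short-range radiant wave) and the anomalous phase correction $\nu(t)[1+\pi\nu(t)-\log\nu(t)]$.

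Finally I would reverse the sequence of conjugations and deformations to recover the asymptotics of $q(x,t)$ itself, as in the last step of \cite{GT09}. The oscillation frequency $16t=\Phi(1)t$ comes directly from the stationary-phase value, and the constant phase $\delta$ is assembled from four sources: (a) the boundary values of the partial transmission factor at the critical point, contributing the two singular integrals over $(0,1)$ involving $T(s)/T'(1)$ and $\log s/(s-2)$; (b) the matching of the partial transmission coefficient against Budylin's parametrix at the scale $\nu(t)$, producing the logarithmic product $\pi^{-1}\log[(\kappa^3+\kappa)/2]\log[(\kappa^3+\kappa)/8]$; (c) the phase of $R(1)$, which contributes exactly the $\arctan[2\kappa/(1-\kappa^2)]$ appearing in the spatial asymptotics (\ref{q behavior}); and (d) the fixed geometric constant $-\pi/3$ inherited from Budylin's model. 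Collecting these contributions yields (\ref{eq res asym}).
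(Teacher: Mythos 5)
Your proposal follows essentially the same route as the paper: conjugation by the partial transmission coefficient $T_0(k)=\exp\int_{(-1,1)}\frac{\log|T(s)|^2}{s-k}\frac{\mathrm{d}s}{2\pi\mathrm{i}}$, contour deformation, localization to crosses at $k=\pm1$, a second conjugation turning the singular cross problem into a matrix RHP on $\mathbb{R}$, resolution of that model via \cite{Budylin20}, and back-substitution as in \cite{GT09}. The only slight misattribution is the constant $-\pi/3$, which in the paper arises not from Budylin's model but from the $-\pi^2/6$ (dilogarithm) term in the asymptotics of $T_0$ near $k=1$, doubled because $A_0^2$ enters the reflection amplitude.
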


By way of discussing this theorem we offer some comments.

\begin{remark}
The class of initial data of the form $\left( \ref{our data}\right) $
appears extraordinarily narrow. But of course by a simple rescaling we can
place the resonance at any point $\omega ^{2}$. Secondly, $\left( \ref{our
data}\right) $ shares the same asymptotic behavior as any WvN type potential
with $\gamma =1$ restricted to $\left( -\infty ,0\right) $ but have explicit
rational scattering data, which has significant technical advantages.
\end{remark}

\begin{remark}
It is shown in \cite{NovikovKhenkin84} that simple zeros of the transmission
coefficient are always associated with the behavior $\left( \ref{pure WvN}%
\right) $ with $\gamma =1$. The approach is based upon the
Gelfand-Levitan-Marchenko equation and does not yield the existence of the
resonance regime. Incidentally, the very concept of WvN resonance (nor
spectral singularity to this matter) is not used in \cite{NovikovKhenkin84}.
\end{remark}

\begin{remark}
Recall that in the short-range case the behavior of $\left. q\left(
x,t\right) \right\vert _{x=-ct}$ as $t\rightarrow +\infty $ looks similar to 
$\left( \ref{eq res asym}\right) $ for any $c>0$ (the \emph{similarity region%
}) with one main difference: $\nu $ is constant. Thus, as opposed to the
classical case where any radiant wave decays as $O\left( t^{-1/2}\right) $
when $t\rightarrow \infty $, our initial date (\ref{our data}) give rise to
a radiant wave that decays as $O\left( \left( \log t/t\right) ^{1/2}\right) $%
. We believe that it is a new phenomenon that holds for any initial data
producing zero transmission (full reflection) at some isolated positive
energies.
\end{remark}

\begin{remark}
In the NLS context, asymptotic regions related to spectral singularities are
studied in \cite{AS77},\cite{Budylin20},\cite{Kamvissis94} by different
methods. It follows from \cite{AS77} that for the focusing NLS a spectral
singularity opens a new asymptotic region where the amplitude decays as $%
O\left( \left( \log t/t\right) ^{1/2}\right) $ and the phase includes $\log
^{2}t$ term (similar to our resonance regime). The part of the paper \cite%
{AS77} devoted to the NLS has no proofs and is revisited in \cite%
{Kamvissis94} where it is claimed that the amplitude in \cite{AS77} is off
by a multiplicative constant and the phase has no $\log ^{2}t$ term. The
de-focusing case is considered in \cite{Budylin20} where the results also
suggest the appearance of a new asymptotic region with the amplitude
behaving like $O\left( \left( \log t/t\right) ^{1/2}\right) $ and the phase
containing a $\log ^{2}t$ term. The paper \cite{Budylin20} relies on totally
different arguments and the author appears to be unaware of \cite{AS77} and 
\cite{Kamvissis94}. It appears that there is no clarity on the issue and we
hope to address it elsewhere.
\end{remark}

\begin{remark}
In the short-range case, generically , $T\left( 0\right) =0$ (i.e. $T\left(
0\right) \neq 0$ only for some exceptional potentials). As it is shown in 
\cite{AS77},\cite{DVZ94}, the latter gives rise to a new asymptotic regime,
called the \emph{collisionless shock region}. We referee the interested
reader to \cite{AS77},\cite{DVZ94} and only mention here that $\left(
x,t\right) $ in this region are no longer on a ray and the wave amplitude
decays as $O\left( \left( \log t/t\right) ^{2/3}\right) $ as $t\rightarrow
+\infty .$
\end{remark}

\section{Original RHP\label{sec original RHP}}

As is always done in the RHP approach to the IST, we rewrite the (time
evolved)\ basic scattering identity (\ref{R basic scatt identity}) as a
vector RHP. To this end, introduce a $2\times 1$ row vector%
\begin{equation}
\mathbf{m}(k,x,t)=\left\{ 
\begin{array}{c@{\quad}l}
\begin{pmatrix}
T(k)m^{\left( -\right) }(k,x,t) & m^{\left( +\right) }(k,x,t)%
\end{pmatrix}%
, & \func{Im}k>0, \\ 
\begin{pmatrix}
m^{\left( +\right) }(-k,x,t) & T(-k)m^{\left( -\right) }(-k,x,t)%
\end{pmatrix}%
, & \func{Im}k<0,%
\end{array}%
\right.  \label{defm}
\end{equation}%
where%
\begin{equation}
m^{\left( \pm \right) }(k;x,t):=\mathrm{e}^{\mp \mathrm{i}kx}\psi ^{\left(
\pm \right) }(x,t;k),\text{\ \ \ (Faddeev functions).}  \label{y}
\end{equation}%
Note that we list $k$ as the first variable as from now on it will be the
main variable, the parameters $\left( x,t\right) $ being often suppressed.
Existence of $\psi ^{\left( \pm \right) }(x,t;k)$ for $t>0$ follows from
considerations given in \cite{NovikovKhenkin84}. Also, since poles of $\psi
^{\left( -\right) }$ are canceled by the zeros of $T$, we conclude that $%
\mathbf{m}$ is bounded on the whole complex plane. Details will be provided
elsewhere for a much wider class of initial data.

Introduce the short hand notation%
\begin{equation*}
\xi\left( k\right) =\exp\left\{ \mathrm{i}t\Phi\left( k\right) \right\} ,\
\Phi\left( k\right) =4k^{3}+\left( x/t\right) k,\ \ x\in\mathbb{R},t\geq0.
\end{equation*}
We treat $\mathbf{m}$ as a solution to

\textbf{RHP\#0. (Original RHP) }Find a $1\times2$ matrix (row) valued
function $\mathbf{m}\left( k\right) $ which is analytic and bounded on $%
\mathbb{C\diagdown R}$ and satisfies:

(1)\ The jump condition%
\begin{equation}
\mathbf{m}_{+}\left( k\right) =\mathbf{m}_{-}\left( k\right) \mathbf{V}%
\left( k\right) \text{ for }k\in \mathbb{R},  \label{main RHP}
\end{equation}%
where $\mathbf{m}_{\pm }\left( k\right) :=\mathbf{m}\left( k\pm \mathrm{i}%
0\right) ,\ k\in \mathbb{R}$,%
\begin{equation}
\mathbf{V}\left( k\right) =\left( 
\begin{array}{cc}
1-\left\vert R\left( k\right) \right\vert ^{2} & -\overline{R}\left(
k\right) \xi \left( k\right) ^{-2} \\ 
R\left( k\right) \xi \left( k\right) ^{2} & 1%
\end{array}%
\right) ,\text{ (jump matrix),}  \label{jump matrix}
\end{equation}%
and $R$ is given by $\left( \ref{TR}\right) $;

(2) The symmetry condition%
\begin{equation*}
\mathbf{m}\left( -k\right) =\mathbf{m}\left( k\right) \sigma_{1}\text{;}
\end{equation*}

(3)\ The normalization condition%
\begin{equation*}
\mathbf{m}\left( k\right) \sim \left( 
\begin{array}{cc}
1 & 1%
\end{array}%
\right) ,\ \ \ k\rightarrow \infty .
\end{equation*}

The solution to the initial problem $\left( \ref{KdV}\right) -\left( \ref%
{KdVID}\right) $ can then be found by%
\begin{equation}
q\left( x,t\right) =2\lim k^{2}\left( m_{1}\left( k,x,t\right) m_{2}\left(
k,x,t\right) -1\right) ,\ \ \ k\rightarrow\infty  \label{q(x,t)}
\end{equation}
where $m_{1,2}$ are the entries of $\mathbf{m}$ (see e.g. \cite{GT09}).

The RHP approach is specifically robust (among others) in asymptotics
analysis of $\mathbf{m}\left( k;x,t\right) $ as $t\rightarrow +\infty $ in
different asymptotic regions \cite{GT09}. But it works well if the
stationary point $k_{0}=\sqrt{-x/12t}$ of $\Phi \left( k\right) $ is not a
real zero of $T\left( k\right) $. Recall that in the short-range case the
only real zero of $T\left( k\right) $ is $k=0$ and does not create any
problems as long as $k_{0}\neq 0$. As already mentioned, the case of $%
k_{0}=0 $ is considered in \cite{DVZ94}. Note however that $0$ is the left
end point of the absolutely continuous spectrum, which is essential for the
approach to work.

\begin{remark}
There is no problem with the well-posedness of RHP\#0 and no problem with
the deformation step as $\mathbf{V}$ is a meromorphic function on the entire
plane. But there is a problem with adjusting the classical nonlinear
steepest descent \cite{DeiftZhou1993}. Recall that in the mKdV case treated
in \cite{DeiftZhou1993} we always have $\left\vert R(k)\right\vert <1$ and
hence $R(k)/(1-|R(k)|^{2})$ can be approximated by analytic functions in the 
$L^{\infty}$ norm. As we have seen already, it is not our case and it
appears to be a good open question how to adjust the nonlinear steepest
descent to RHP\#0.
\end{remark}

\begin{remark}
RHP\#0 does not appear singular so far. Its singularity will be clear below.
\end{remark}

\section{Partial Transmission Coefficient\label{sec Partial T}}

In this section we study what is commonly referred to as the\emph{\ partial
transmission coefficient}, which plays the crucial role in the conjugation
step discussed below. Following \cite{GT09} we introduce it by 
\begin{equation*}
T(k,k_{0})=\exp\int\limits_{\left( -k_{0},k_{0}\right) }\frac{\log |T(s)|^{2}%
}{s-k}\frac{{\mathrm{d}s}}{2\pi\mathrm{i}}.
\end{equation*}
Since we are only concerned with the asymptotics related to the resonance
region we take $k_{0}=1$ and denote%
\begin{equation}
T_{0}\left( k\right) =T(k,1)=\exp\int\limits_{\left( -1,1\right) }\frac{%
\log|T(s)|^{2}}{s-k}\frac{{\mathrm{d}s}}{2\pi\mathrm{i}}.  \label{T_0}
\end{equation}
If $k_{0}=+\infty$ then $T\left( k,+\infty\right) $ returns the (full)
transmission coefficient $T\left( k\right) $.

$T_{0}(k)$ is clearly an analytic function for $k\in \mathbb{C}\backslash %
\left[ -1,1\right] $. The following statement extends that of \cite{GT09}.

\begin{theorem}
\label{thm:part} The partial transmission coefficient $T_{0}(k)$ satisfies
the jump condition 
\begin{equation}
T_{0+}(k)=T_{0-}(k)(1-\left\vert R(k)\right\vert ^{2}),\ \ k\in \left(
-1,1\right)   \label{eq:jumpt}
\end{equation}%
and

\begin{enumerate}
\item $T_{0}(-k)=T_{0}(k)^{-1}$, $T_{0}\left( \overline{k}\right) =\overline{%
T_{0}\left( k\right) }^{-1},k\in\mathbb{C}\backslash\left[ -1,1\right] $;

\item $T_{0}(-k)=\overline{T_{0}(\overline{k})}$, $k\in\mathbb{C}$, in
particular $T_{0}(k)$ is real for $k\in$\textrm{$i$}$\mathbb{R}$;

\item the behavior near $k=0$ is given by $T_{0}(k)\sim CT(k)$ with $C\neq0$
for $\func{Im}k\geq0$;

\item $T_{0}\left( k\right) $ can be factored out as%
\begin{equation}
T_{0}\left( k\right) =T_{0}^{\left( \mathrm{reg}\right) }\left( k\right)
T_{0}^{\left( \mathrm{sng}\right) }\left( k\right) ,  \label{T_0 at 1}
\end{equation}%
where the regular part%
\begin{equation*}
T_{0}^{\left( \mathrm{reg}\right) }\left( k\right) =\frac{{k}}{\pi \mathrm{i}%
}\int\limits_{0}^{1}\log \left\vert \frac{T(s)/\left( 1-s\right) }{T^{\prime
}\left( 1\right) }\right\vert ^{2}\frac{\mathrm{d}s}{s^{2}-k^{2}}
\end{equation*}%
is continuously differentiable at $k=\pm 1$, and the singular part%
\begin{equation}
T_{0}^{\left( \mathrm{sng}\right) }\left( k\right) =\left( \frac{k-1}{k+1}%
\right) ^{\mathrm{i}\nu }\exp \left\{ k\int_{0}^{1}\frac{\log \left(
1-s\right) ^{2}}{s^{2}-k^{2}}\frac{\mathrm{d}s}{\pi \mathrm{i}}\right\} ,
\label{sing part}
\end{equation}%
\begin{equation}
\nu :=-\left( 1/\pi \right) \log \left\vert T^{\prime }\left( 1\right)
\right\vert =\left( 1/\pi \right) \log a.  \label{nu}
\end{equation}%
Moreover,%
\begin{equation}
T_{0}\left( k\right) \sim A_{0}\exp \left\{ \mathrm{i}\nu \log \left(
k-1\right) +\frac{1}{2\mathrm{i}\pi }\log ^{2}\left( k-1\right) \right\}
,k\rightarrow 1,  \label{T_0 asympt}
\end{equation}%
where $A_{0}$ is a unimodular constant given by%
\begin{equation}
A_{0}=\exp \frac{\mathrm{i}}{\pi }\left\{ \int\limits_{0}^{1}\log \left\vert 
\frac{T(s)/T^{\prime }\left( 1\right) }{1-s}\right\vert ^{2}\frac{\mathrm{d}s%
}{1-s^{2}}-\pi \nu \log 2-\frac{\pi ^{2}}{6}-\int\limits_{0}^{1}\frac{\log s{%
\mathrm{d}s}}{s-2}\right\} .  \label{A_0}
\end{equation}
\end{enumerate}
\end{theorem}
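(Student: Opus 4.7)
The proof relies on Sokhotski--Plemelj applied to (\ref{T_0}) together with the even symmetry of $\log|T(s)|^2$ on $(-1,1)$. I would first dispose of items (1)--(3), which are essentially formal. The jump (\ref{eq:jumpt}) is immediate from Sokhotski--Plemelj, $\log T_{0+}-\log T_{0-}=\log|T|^2$, combined with unitarity $|T|^2+|R|^2=1$. Reality of $q$ implies $T(-k)=\overline{T(k)}$ for $k\in\mathbb{R}$, hence $|T(-s)|=|T(s)|$; substituting $s\mapsto -s$ in the defining integral flips the sign of the exponent and gives $T_0(-k)=T_0(k)^{-1}$, while conjugating the integral gives $T_0(\bar k)=\overline{T_0(k)}^{-1}$; item (2) is their composition. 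For the $k=0$ behavior in (3), I would compare $T_0$ with the full $T$: since $q$ has no bound states and $T$ from (\ref{TR}) has a simple zero at $k=0$ from the factor $k^3-k$, the quotient $T(k)/T_0(k)$ is the exponential of the tail Cauchy integral over $|s|>1$, which is analytic and nonvanishing at $k=0$. Hence $T_0(k)\sim C T(k)$ with $C\neq 0$.

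For the factorization (\ref{T_0 at 1}), I would use evenness to fold the integration onto $(0,1)$:
\begin{equation*}
\log T_0(k)=\frac{k}{\pi\I}\int_0^1\frac{\log|T(s)|^2}{s^2-k^2}\,\mathrm{d}s,
\end{equation*}
and then decompose
\begin{equation*}
\log|T(s)|^2=\log\left|\frac{T(s)/(s-1)}{T'(1)}\right|^2+\log|T'(1)|^2+\log(1-s)^2.
\end{equation*}
Since $T$ has a simple zero at $s=1$ with $T'(1)\neq 0$, the first summand extends smoothly across $s=1$ (and vanishes there), so its integral yields $T_0^{(\mathrm{reg})}$ which is continuously differentiable at $k=\pm 1$. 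The constant $\log|T'(1)|^2=-2\pi\nu$ (by (\ref{nu})) combined with the elementary evaluation $\int_0^1\mathrm{d}s/(s^2-k^2)=(2k)^{-1}\log((k-1)/(k+1))$ (the branch being fixed by analytic continuation from $\mathbb{C}\setminus[-1,1]$) produces the factor $((k-1)/(k+1))^{\I\nu}$; the last summand is the explicit exponential in (\ref{sing part}).

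The principal obstacle is the asymptotic (\ref{T_0 asympt}) with the precise constant $A_0$ in (\ref{A_0}). For this I would analyze
\begin{equation*}
J(k):=\int_0^1\frac{\log(1-s)^2}{s^2-k^2}\,\mathrm{d}s
\end{equation*}
as $k\to 1$ by substituting $u=1-s$, $\epsilon=1-k$ and applying partial fractions $1/(s^2-k^2)=(2k)^{-1}[1/(s-k)-1/(s+k)]$; only the $1/(s-k)$ piece is singular in $\epsilon$, so the task reduces to the asymptotic of $F(\epsilon):=\int_0^1 \log u\,\mathrm{d}u/(u-\epsilon)$. The splitting $\log u=\log(u-\epsilon)+\log\bigl(1-\epsilon/u\bigr)^{-1}$ (or, equivalently, the rescaling $u=\epsilon v$ followed by integration by parts) extracts the key singular piece $-\tfrac{1}{2}\log^2(-\epsilon)$, which via $-\epsilon=k-1$ produces the $\frac{1}{2\I\pi}\log^2(k-1)$ term in (\ref{T_0 asympt}); the $\I\nu\log(k-1)$ piece comes from expanding $((k-1)/(k+1))^{\I\nu}$ at $k=1$ and pushing the $-\I\nu\log 2$ into $A_0$. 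The residual finite contributions must then be bookkept carefully: the $-\pi^2/6$ in (\ref{A_0}) emerges from a $\mathrm{Li}_2$-type integral that appears in the expansion of $F(\epsilon)$, the term $\int_0^1\log s\,\mathrm{d}s/(s-2)$ arises from the analytic regularization of the $1/(s+k)$ piece at $k=1$, and $T_0^{(\mathrm{reg})}(1)$ supplies the remaining integral term in $A_0$. The real delicacy is in maintaining a consistent choice of branches for $\log(k-1)$ and $\log((k-1)/(k+1))$ throughout, so that every residual constant reassembles exactly into (\ref{A_0}).
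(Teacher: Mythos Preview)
Your proposal is correct and follows essentially the same route as the paper: the paper also dismisses (1)--(3) as straightforward, folds the integral onto $(0,1)$ by evenness, performs exactly your three-term decomposition of $\log|T(s)|^2$ to obtain (\ref{T_0 at 1})--(\ref{sing part}), and for the asymptotic (\ref{T_0 asympt}) splits the singular integral via partial fractions into $I_1(k)=\int_0^1\frac{\log(1-s)}{s-k}\,\mathrm{d}s$ and a regular piece $I_2$, obtaining $I_1(k)\sim\tfrac12\log^2(k-1)+\pi^2/6$ and $I_2(1)=\int_0^1\frac{\log\lambda}{\lambda-2}\,\mathrm{d}\lambda$. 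Your $F(\epsilon)$ is precisely the paper's $I_1$ after the substitution $u=1-s$, and your identification of the $\mathrm{Li}_2$-constant and the $1/(s+k)$-residual matches the paper's bookkeeping for $A_0$.
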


\begin{proof}
The items (1)-(3) are straightforward to check. Item (4) is a bit technical.
Since $\left\vert T\left( s\right) \right\vert $ is even, we clearly have%
\begin{align*}
\int\limits_{-1}^{1}\frac{\log |T(s)|^{2}}{s-k}{\mathrm{d}s}& {=2k}%
\int\limits_{0}^{1}\frac{\log |T(s)|^{2}}{s^{2}-k^{2}}{\mathrm{d}s} \\
& ={2k}\int\limits_{0}^{1}\frac{\log |T(s)/\left( 1-s\right) |^{2}}{%
s^{2}-k^{2}}{\mathrm{d}s+2k}\int\limits_{0}^{1}\frac{\log \left( 1-s\right)
^{2}}{s^{2}-k^{2}}{\mathrm{d}s}
\end{align*}%
and therefore%
\begin{equation*}
T_{0}\left( k\right) =\exp \left\{ k\int\limits_{0}^{1}\frac{\log
|T(s)/\left( 1-s\right) )|^{2}}{s^{2}-k^{2}}\frac{{\mathrm{d}s}}{\pi \mathrm{%
i}}\right\} \exp \left\{ k\int\limits_{0}^{1}\frac{\log \left( 1-s\right)
^{2}}{s^{2}-k^{2}}\frac{{\mathrm{d}s}}{\pi \mathrm{i}}\right\} .
\end{equation*}%
It remains to factor out the continuous part of the first factor on the
right hand side. To this end note that%
\begin{equation*}
\lim_{s\rightarrow 1}|T(s)/\left( 1-s\right) )|=\left\vert
\lim_{s\rightarrow 1}T(s)/\left( 1-s\right) \right\vert =\left\vert
T^{\prime }\left( 1\right) \right\vert =1/a
\end{equation*}%
and therefore%
\begin{align*}
& k\int\limits_{0}^{1}\frac{\log |T(s)/\left( 1-s\right) )|^{2}}{s^{2}-k^{2}}%
\frac{{\mathrm{d}s}}{\pi \mathrm{i}} \\
& =\int\limits_{0}^{1}\frac{k}{s^{2}-k^{2}}\log \left\vert \frac{T(s)/\left(
1-s\right) )}{T^{\prime }\left( 1\right) }\right\vert ^{2}\frac{{\mathrm{d}s}%
}{\pi \mathrm{i}}+\log \left\vert T^{\prime }\left( 1\right) \right\vert
^{2}\int\limits_{0}^{1}\frac{k}{s^{2}-k^{2}}\frac{{\mathrm{d}s}}{\pi \mathrm{%
i}}.
\end{align*}%
Eq $\left( \ref{sing part}\right) $ follows now once we observe that%
\begin{equation*}
\nu \int\limits_{0}^{1}\frac{k}{s^{2}-k^{2}}\frac{{\mathrm{d}s}}{\pi \mathrm{%
i}}=\left( \frac{k-1}{k+1}\right) ^{\mathrm{i}\nu },
\end{equation*}%
where $\nu $ is given by $\left( \ref{nu}\right) $. It remains to
demonstrate $\left( \ref{T_0 asympt}\right) $. Consider%
\begin{equation*}
k\int\limits_{0}^{1}\frac{\log \left( 1-s\right) ^{2}}{s^{2}-k^{2}}{\mathrm{d%
}s}=\int\limits_{0}^{1}\frac{\log \left( 1-s\right) }{s-k}{\mathrm{d}s+}%
\int\limits_{-1}^{0}\frac{\log \left( 1+s\right) }{s-k}{\mathrm{d}s=:I}%
_{1}\left( k\right) +I_{2}\left( k\right) .
\end{equation*}%
By a direct computation, for ${I}_{1}\left( k\right) $ we have%
\begin{equation*}
I_{1}\left( k\right) =\int\limits_{0}^{1}\frac{\log \left( 1-s\right) }{s-k}{%
\mathrm{d}s\sim }\frac{1}{2}\log ^{2}\left( k-1\right) +\frac{\pi ^{2}}{6},\
\ \ k\rightarrow 1.
\end{equation*}%
Since $I_{2}\left( k\right) $ is clearly continuously differentiable at $k=1$
one has%
\begin{align*}
I_{2}\left( k\right) & \sim I_{2}\left( 1\right) =\int\limits_{-1}^{0}\frac{%
\log \left( 1+s\right) }{s-1}{\mathrm{d}s} \\
& {=}\int\limits_{0}^{1}\frac{\log \lambda }{\lambda -2}{\mathrm{d}\lambda }%
,\ \ \ k\rightarrow 1
\end{align*}%
and thus%
\begin{align*}
& k\int\limits_{0}^{1}\frac{\log \left( 1-s\right) ^{2}}{s^{2}-k^{2}}{%
\mathrm{d}s} \\
& {\sim }\frac{1}{2}\log ^{2}\left( k-1\right) +\frac{\pi ^{2}}{6}%
+\int\limits_{0}^{1}\frac{\log \lambda }{\lambda -2}{\mathrm{d}\lambda },\ \
\ k\rightarrow 1.
\end{align*}%
We have%
\begin{align*}
& \exp \left\{ k\int\limits_{0}^{1}\frac{\log \left( 1-s\right) ^{2}}{%
s^{2}-k^{2}}\frac{{\mathrm{d}s}}{\pi \mathrm{i}}\right\} \\
& \sim \exp \left\{ \frac{1}{2\pi \mathrm{i}}\log ^{2}\left( k-1\right)
\right\} \exp \left\{ -\frac{\mathrm{i}\pi }{6}-\frac{\mathrm{i}}{\pi }%
\int\limits_{0}^{1}\frac{\log \lambda {\mathrm{d}\lambda }}{\lambda -2}%
\right\} ,\ \ \ k\rightarrow 1.
\end{align*}
\end{proof}

\begin{remark}
In the classical case the singular part of $T_{0}\left( k\right) $ is%
\begin{equation*}
T_{0}^{\left( \mathrm{sng}\right) }\left( k\right) =\left( \frac{k-1}{k+1}%
\right) ^{\mathrm{i}\nu }
\end{equation*}%
which is a bounded function. The extra factor in $\left( \ref{sing part}%
\right) $ containing $\log ^{2}$ term leads to a more singular (unbounded)
behavior of $T_{0}\left( k\right) $ at $k=\pm 1$. The presence of the $\log
^{2}$ term is responsible for the new asymptotic regime.
\end{remark}

\section{First Conjugation and Deformation\label{sec:condef}}

This section demonstrates how to conjugate our RHP\#0 and how to deform our
jump contour, such that the jumps will be exponentially close to the
identity away from the stationary phase points. This step is the same as in
the classical case \cite{GT09} and we just directly record the result:
RHP\#1 below is equivalent to RHP\#0.

\textbf{RHP\#1 (Conjugated RHP\#0) }Find a $1\times2$ matrix (row) valued
function $\mathbf{m}^{\left( 1\right) }\left( k\right) $ which is analytic
on $\mathbb{C\diagdown R}$ and satisfies:

(1)\ The jump condition%
\begin{equation}
\mathbf{m}_{+}^{\left( 1\right) }\left( k\right) =\mathbf{m}_{-}^{\left(
1\right) }\left( k\right) \mathbf{V}^{\left( 1\right) }\left( k\right) \text{
for }k\in\mathbb{R},
\end{equation}
where%
\begin{equation}
\mathbf{V}^{\left( 1\right) }\left( k\right) =\mathbf{B}_{-}(k)^{-1}\mathbf{B%
}_{+}(k),
\end{equation}%
\begin{equation}
\mathbf{B}_{-}(k)=\left\{ 
\begin{array}{ccc}
\begin{pmatrix}
1 & \dfrac{R(-k)T_{0}\left( k\right) ^{2}}{\xi\left( k\right) ^{2}} \\ 
0 & 1%
\end{pmatrix}
& , & k\in\mathbb{R}\backslash\left[ -1,1\right] \\ 
\begin{pmatrix}
1 & 0 \\ 
-\dfrac{R(k)\xi\left( k\right) ^{2}}{T_{0-}(k)^{2}T\left( k\right) T\left(
-k\right) } & 1%
\end{pmatrix}
& , & k\in\left[ -1,1\right]%
\end{array}
\right. ,  \label{B -}
\end{equation}%
\begin{equation}
\mathbf{B}_{+}(k)=\left\{ 
\begin{array}{ccc}
\begin{pmatrix}
1 & 0 \\ 
\dfrac{R(k)\xi\left( k\right) ^{2}}{T_{0}\left( k\right) ^{2}} & 1%
\end{pmatrix}
& , & k\in\mathbb{R}\backslash\left[ -1,1\right] \\ 
\begin{pmatrix}
1 & -\dfrac{T_{0+}\left( k\right) ^{2}R(-k)}{T\left( k\right) T\left(
-k\right) \xi\left( k\right) ^{2}} \\ 
0 & 1%
\end{pmatrix}
& , & k\in\left[ -1,1\right]%
\end{array}
\right. ;  \label{B+}
\end{equation}

(2) The symmetry condition%
\begin{equation*}
\mathbf{m}^{\left( 1\right) }\left( -k\right) =\mathbf{m}^{\left( 1\right)
}\left( k\right) \sigma_{1}\text{;}
\end{equation*}

(3)\ The normalization condition%
\begin{equation*}
\mathbf{m}^{\left( 1\right) }\left( k\right) \sim \left( 
\begin{array}{cc}
1 & 1%
\end{array}%
\right) ,k\rightarrow \infty .
\end{equation*}

The solutions to RHP\#0 and RHP\#1 are related by%
\begin{equation}
\mathbf{m}\left( k\right) =\mathbf{m}^{\left( 1\right) }\left( k\right)
T_{0}\left( k\right) ^{\sigma _{3}}.  \label{eq m and m^1}
\end{equation}

We show that $\mathbf{B}_{\pm }$ admits analytic continuation into $\mathbb{C%
}_{\pm }$ respectively with no poles. Consider $\mathbf{B}_{+}\left(
k\right) $ only. Due to $\left( \ref{TR}\right) $ we have%
\begin{equation*}
\dfrac{R(-k)}{T\left( k\right) T\left( -k\right) }=-2\mathrm{i}a\dfrac{%
\left( k+\mathrm{i}\kappa \right) \left( k-k_{-}\right) \left(
k-k_{+}\right) }{\left( k^{3}-k\right) ^{2}}
\end{equation*}%
and thus the matrix entry $-\dfrac{T_{0+}\left( k\right) ^{2}R(-k)}{T\left(
k\right) T\left( -k\right) \xi \left( k\right) ^{2}}$ can be continued from $%
\left[ -1,1\right] $ to $\mathbb{C}^{+}$ (with no poles). Similarly, one
easily sees that $\dfrac{R(k)\xi \left( k\right) ^{2}}{T_{0}\left( k\right)
^{2}}$ continues from $\mathbb{R}\backslash \left[ -1,1\right] $ to $\mathbb{%
C}^{+}$. Finally notice that due to item 3 of Theorem \ref{thm:part}, $k=0$
is not a singularity of $\mathbf{B}_{\pm }\left( k\right) $ and therefore,
RHP\#1 can be deformed the same way as it is done in \cite{GT09}. Denote by $%
\Gamma $ the (oriented) contour depicted in Figure 1 and by $\Gamma _{\pm
}=\Gamma \cap \mathbb{C}^{\pm }$ (parts of $\Gamma $ in the upper/lower
half-planes) chosen such that $\Gamma _{-}$ and $\Gamma _{+}$ lie in the
region where the irrespective power $\pm 1$ of $\left\vert \xi \left(
k\right) \right\vert =\exp \left( -t\func{Im}k\right) $ provides an
exponential decay as $t\rightarrow +\infty $.

\textbf{RHP\#2 (Deformed RHP\#1)}. Find a $1\times2$ matrix (row) valued
function $\mathbf{m}^{\left( 2\right) }\left( k\right) $ which is analytic
on $\mathbb{C\diagdown}\Gamma$ and satisfies:

(1)\ The jump condition%
\begin{equation}
\mathbf{m}_{+}^{\left( 2\right) }\left( k\right) =\mathbf{m}_{-}^{\left(
2\right) }\left( k\right) \mathbf{V}^{\left( 2\right) }\left( k\right) \text{
for }k\in \Gamma ,
\end{equation}%
where 
\begin{equation}
\mathbf{V}^{\left( 2\right) }(k)=%
\begin{cases}
\mathbf{B}_{+}(k), & k\in \Gamma _{+} \\ 
\mathbf{B}_{-}(k)^{-1}, & k\in \Gamma _{-}%
\end{cases}%
;
\end{equation}

(2) The symmetry condition%
\begin{equation*}
\mathbf{m}^{\left( 2\right) }\left( -k\right) =\mathbf{m}^{\left( 2\right)
}\left( k\right) \sigma_{1}\text{;}
\end{equation*}

(3)\ The normalization condition%
\begin{equation*}
\mathbf{m}^{\left( 2\right) }\left( k\right) \sim \left( 
\begin{array}{cc}
1 & 1%
\end{array}%
\right) ,k\rightarrow \infty .
\end{equation*}

The solutions of RHP\#2 and RHP\#1 are related by%
\begin{equation}
\mathbf{m}^{\left( 2\right) }(k)=%
\begin{cases}
\mathbf{m}^{\left( 1\right) }(k)\mathbf{B}_{+}(k)^{-1}, & k\text{ is between 
}\mathbb{R\ }\text{and }\Gamma _{+} \\ 
\mathbf{m}^{\left( 1\right) }(k)\mathbf{B}_{-}(k)^{-1}, & k\text{ is between 
}\mathbb{R\ }\text{and }\Gamma _{-} \\ 
\mathbf{m}^{\left( 1\right) }(k), & \text{elsewhere}%
\end{cases}%
.
\end{equation}

In fact, the jump along $\mathbb{R}$ disappears but the solution between $%
\Gamma_{-}$ and $\Gamma_{+}$ will not be needed, only outside where 
\begin{equation}
\mathbf{m}^{\left( 2\right) }(k)=\mathbf{m}^{\left( 1\right) }(k).
\label{RHP1 = RHP2}
\end{equation}

The main feature of RHP\#2 is that off-diagonal entries of $\mathbf{V}%
^{\left( 2\right) }$ along $\Gamma \backslash \{-1,1\}$ are exponentially
decreasing as $t\rightarrow +\infty $. 
\begin{figure}[tbp]
\centering
\begin{picture}(7,5.2)
		\put(0,2.5){\line(1,0){7.0}}

		\put(7.3,2.4){$\R$}
		
		\put(0,2){\line(1,0){2.0}}
		\put(2.9,3){\line(1,0){1.2}}
		\put(5,2){\line(1,0){2.0}}
		\put(1.1,2){\vector(1,0){0.4}}
		\put(5.5,2){\vector(1,0){0.4}}
		\put(3.3,3){\vector(1,0){0.4}}
		
		\put(1.3,1.5){$\Gamma_-$}
		\put(5.7,1.5){$\Gamma_-$}
		\put(3.5,3.3){$\Gamma_+$}
		
		\curve(2.,2., 2.2,2.1, 2.4,2.4, 2.45,2.5, 2.5,2.6, 2.7,2.9, 2.9,3.)
		
		\curve(4.1,3., 4.3,2.9, 4.5,2.6, 4.55,2.5, 4.6,2.4, 4.8,2.1, 5.,2.)
		
		\put(0,3){\line(1,0){2.0}}
		\put(2.9,2){\line(1,0){1.2}}
		\put(5,3){\line(1,0){2.0}}
		\put(1.1,3){\vector(1,0){0.4}}
		\put(5.5,3){\vector(1,0){0.4}}
		\put(3.3,2){\vector(1,0){0.4}}
		
		\curve(2.,3., 2.2,2.9, 2.4,2.6, 2.45,2.5, 2.5,2.4, 2.7,2.1, 2.9,2.)
		
		\curve(4.1,2., 4.3,2.1, 4.5,2.4, 4.55,2.5, 4.6,2.6, 4.8,2.9, 5.,3.)
		
		\put(1.3,3.3){$\Gamma_+$}
		\put(5.7,3,3){$\Gamma_+$}
		\put(3.5,1.5){$\Gamma_-$}
		
		\put(0.3,1.0){$\scriptstyle\im\Phi<0$}
		\put(0.3,3.8){$\scriptstyle\im\Phi>0$}
		\put(5.9,1.0){$\scriptstyle\im\Phi<0$}
		\put(5.9,4.0){$\scriptstyle\im\Phi>0$}
		\put(2.9,4.5){$\scriptstyle\im\Phi<0$}
		\put(2.9,0.5){$\scriptstyle\im\Phi>0$}
		
		\put(2.6,2.6){$\scriptstyle -1$}
		\put(4.7,2.6){$\scriptstyle 1$}
		
		\curvedashes{0.05,0.05}
		
		\curve(0.,0.05, 0.85,0.5, 1.55,1., 2.05,1.5, 2.45,2.5, 2.05,3.5, 1.55,4., 0.85,4.5, 0.,4.94)
		
		\curve(7.,0.05, 6.15,0.5, 5.45,1., 4.95,1.5, 4.55,2.5, 4.95,3.5, 5.45,4., 6.15,4.5, 7.,4.94)
	\end{picture}
\caption{Signature plane and deformed contour}
\end{figure}
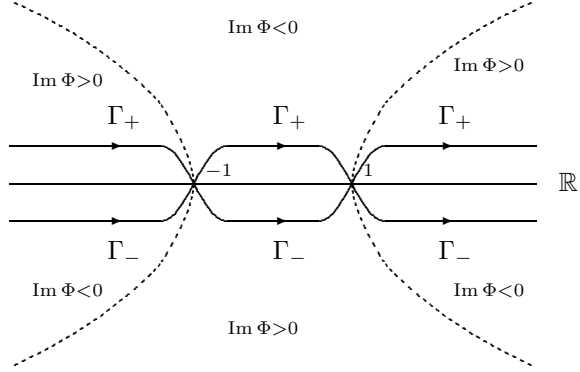

\begin{remark}
RHP\#1 and hence RHP\#2 are singular. Indeed, $\left( \ref{eq m and m^1}%
\right) $ implies that the first entry of $\mathbf{m}^{\left( 1\right) }(k)$
is unbounded at $k=\pm 1.$ Since the transformation $\left( \ref{eq m and
m^1}\right) $ is just algebraic, well-posedness is not an issue.
\end{remark}

\section{Matrix RHP\ on crosses\label{sec Matrix RHP}}

In the previous section we have reduced our original RHP to a RHP for $%
\mathbf{m}^{\left( 2\right) }(k)$ such that away from $\pm 1$ the jumps are
exponentially close to the identity as $t\rightarrow +\infty $ and only
parts of $\Gamma $ that are close to the stationary points $\pm 1$
contribute to the solution. This prompts a new RHP, on two small crosses $%
\Gamma _{\pm 1}=\cup _{j=1}^{4}\gamma _{\pm j}$ where $\gamma _{\pm j}$ are
given ($\epsilon $ is a small enough number which value is not essential)%
\begin{align*}
\gamma _{\pm 1}& =\{\pm 1+u\mathrm{e}^{-\mathrm{i}\pi /4},\,u\in \left[
0,\epsilon \right] \} & \gamma _{\pm 2}& =\{\pm 1+u\mathrm{e}^{\mathrm{i}\pi
/4},\,u\in \left[ 0,\epsilon \right] \} \\
\gamma _{\pm 3}& =\{\pm 1+u\mathrm{e}^{3\mathrm{i}\pi /4},\,u\in \left[
0,\epsilon \right] \} & \gamma _{\pm 4}& =\{\pm 1+u\mathrm{e}^{-3\mathrm{i}%
\pi /4},\,u\in \left[ 0,\epsilon \right] \},
\end{align*}%
oriented in the direction of the increase of $\func{Re}k$.

As typical in such a situation, we state our new RHP as a matrix one:

\textbf{RHP\#3 (Matrix RHP on small crosses)}. Find a $2\times 2$ matrix
valued function $\mathbf{M}^{\left( 3\right) }\left( k\right) $ which is
analytic away from $\Gamma _{-1}\cup \Gamma _{+1}$ and satisfies:

(1)\ The jump condition%
\begin{equation*}
\mathbf{M}_{+}^{\left( 3\right) }\left( k\right) =\mathbf{M}_{-}^{\left(
3\right) }\left( k\right) \mathbf{V}^{\left( 3\right) }\left( k\right) \text{
for }k\in \Gamma _{-1}\cup \Gamma _{+1},
\end{equation*}%
where%
\begin{equation*}
\mathbf{V}^{\left( 3\right) }(k)=\mathbf{V}^{\left( 2\right) }\left(
k\right) ,\ \ \ k\in \gamma _{\pm j},\ \ \ j=1,2,3,4;
\end{equation*}

(2) The symmetry condition%
\begin{equation}
\mathbf{M}^{\left( 3\right) }\left( -k\right) =\sigma_{1}\mathbf{M}^{\left(
3\right) }\left( k\right) \sigma_{1}\text{;}  \label{symmetry cond for mat}
\end{equation}

(3)\ The normalization condition%
\begin{equation*}
\mathbf{M}^{\left( 3\right) }\left( k\right) \sim \mathbf{I},\ \ \
k\rightarrow \infty .
\end{equation*}

Note that our enumeration of $\gamma _{j}$ is chosen to agree with that of 
\cite{GT09}. We now use the following result (see e.g. \cite{GT09}). Suppose
that for every sufficiently small $\epsilon >0$ both the $L^{2}$ and the $%
L^{\infty }$ norms of $\mathbf{V}^{\left( 2\right) }-\mathbf{I}$ are $%
O(b\left( t\right) ^{-\beta })$ with some $b\left( t\right) \rightarrow
+\infty $ (given below) and $\beta >0$ away from some $\varepsilon $
neighborhoods of the points $k=\pm 1$. Moreover, suppose that the solution
to the matrix problem with jump $\mathbf{V}^{\left( 2\right) }(k)$
restricted to the $\epsilon $ neighborhood of $\pm 1$ has a solution which
satisfies 
\begin{equation}
\mathbf{M}_{\pm 1}(k)=\mathbf{I}+\frac{1}{b(t)^{\alpha }}\frac{\mathbf{M}%
_{\pm 1}}{k-\left( \pm 1\right) }+O(b(t)^{-\beta }),\ \ \ \left\vert
k-\left( \pm 1\right) \right\vert >\varepsilon  \label{M+-}
\end{equation}%
with some $\alpha \in \lbrack \beta /2,\beta )$. Then the solution $\mathbf{M%
}^{\left( 3\right) }(k)$ to RHP\#3 is given by%
\begin{equation}
\mathbf{M}^{\left( 3\right) }(k)=\mathbf{I}+\frac{1}{b(t)^{\alpha }}\left( 
\frac{\mathbf{M}_{1}}{k-1}+\frac{\mathbf{M}_{-1}}{k+1}\right)
+O(b(t)^{-\beta }),t\rightarrow +\infty ,  \label{M3 asympt}
\end{equation}%
where the error term depends on the distance of $k$ to $\gamma _{\pm
j},j=1,2,3,4$. To obey $\left( \ref{symmetry cond for mat}\right) $ we must
have%
\begin{equation*}
\mathbf{M}_{-1}=-\sigma _{1}\mathbf{M}_{1}\sigma _{1}
\end{equation*}%
and for $\left( \ref{M3 asympt}\right) $ we finally have%
\begin{equation*}
\mathbf{M}^{\left( 3\right) }(k)\sim \mathbf{I}+\frac{1}{b(t)^{\alpha }k}%
\left( \mathbf{M}_{1}-\sigma _{1}\mathbf{M}_{1}\sigma _{1}\right) ,\ \ \
k\rightarrow \infty ,t\rightarrow +\infty .
\end{equation*}%
Consequently the (row) solution to RHP\#2 is then given by%
\begin{equation}
\mathbf{m}^{\left( 2\right) }(k)=%
\begin{pmatrix}
1 & 1%
\end{pmatrix}%
\left\{ \mathbf{I}+\frac{1}{b(t)^{\alpha }}\left( \frac{\mathbf{M}_{1}}{k-1}-%
\frac{\sigma _{1}\mathbf{M}_{1}\sigma _{1}}{k+1}\right) \right\}
+O(b(t)^{-\beta }),t\rightarrow +\infty .  \label{m^2 asym}
\end{equation}%
Thus, the problem boils down to finding $\mathbf{M}_{1}$.

\section{Asymptotics of $\mathbf{B}_{\pm}$ around $k=1$\label{sec asym of
B's}}

In this section we derive necessary asymptotics of jump matrices $\mathbf{B}%
_{\pm }\left( k\right) $ at $k=1$. Observing first that since $k=1$ is a
stationary point for the phase $\Phi \left( k\right) $, we have%
\begin{equation}
\xi \left( k\right) ^{2}\sim \exp \mathrm{i}\left\{ -16t+24t\left(
k-1\right) ^{2}\right\} ,\ \ k\rightarrow 1.  \label{asympt for xi}
\end{equation}%
The common substitution $k=1+\left( 48t\right) ^{-1/2}z$ shifts our cross $%
\Gamma _{1}$ to the origin and removes time dependence from the oscillatory
exponential $\left( \ref{asympt for xi}\right) $. Omitting straightforward
computations based on $\left( \ref{asympt for xi}\right) $, Theorems \ref%
{Thm on Q} and \ref{thm:part} one then gets (below $k=1+\left( 48t\right)
^{-1/2}z,\ \ \ z\rightarrow 0$)%
\begin{equation}
\begin{tabular}{lllll}
\textbf{$B$}$_{-}(k)^{-1}$ & $\sim $ & $%
\begin{pmatrix}
1 & -R_{1}\varphi \left( z\right) ^{2}\mathrm{e}^{-\mathrm{i}z^{2}/2} \\ 
0 & 1%
\end{pmatrix}%
$ & $=:\mathbf{V}_{1}\left( z\right) ,$ & $\arg z=-$\textrm{$i$}$\pi /4$ \\ 
\textbf{$B$}$_{+}(k)$ & $\sim $ & $%
\begin{pmatrix}
1 & 0 \\ 
R_{2}\varphi \left( z\right) ^{-2}\mathrm{e}^{\mathrm{i}z^{2}/2} & 1%
\end{pmatrix}%
$ & $=:\mathbf{V}_{2}\left( z\right) ,$ & $\arg z=$\textrm{$i$}$\pi /4$ \\ 
\textbf{$B$}$_{+}(k)$ & $\sim $ & $%
\begin{pmatrix}
1 & -R_{3}\varphi \left( z\right) ^{2}z^{-2}\mathrm{e}^{-\mathrm{i}z^{2}/2}
\\ 
0 & 1%
\end{pmatrix}%
$ & $=:\mathbf{V}_{3}\left( z\right) ,$ & $\arg z=3$\textrm{$i$}$\pi /4$ \\ 
\textbf{$B$}$_{-}(k)^{-1}$ & $\sim $ & $%
\begin{pmatrix}
1 & 0 \\ 
R_{4}\varphi \left( z\right) ^{-2}z^{-2}\mathrm{e}^{\mathrm{i}z^{2}/2} & 1%
\end{pmatrix}%
$ & $=:\mathbf{V}_{4}\left( z\right) ,$ & $\arg z=-3$\textrm{$i$}$\pi /4$%
\end{tabular}%
\ \ \ ,  \label{Asympt of B}
\end{equation}%
where%
\begin{equation*}
\varphi \left( z\right) =\exp \frac{\mathrm{i}}{2\pi }\left\{ \log \left(
48a^{2}t\right) \log z-\log ^{2}z\right\}
\end{equation*}%
and ($A_{0}$ is given by $\left( \ref{A_0}\right) $)%
\begin{equation*}
\begin{array}{cc}
R_{1}=\mathrm{e}^{16\mathrm{i}t}\dfrac{1+\mathrm{i}\kappa }{1-\mathrm{i}%
\kappa }A_{0}^{2}\exp \dfrac{1}{\pi \mathrm{i}}\left\{ \log a\log \left(
48t\right) +\frac{1}{4}\log ^{2}\left( 48t\right) \right\} , & R_{2}=1/R_{1}
\\ 
R_{3}=48a^{2}t\ R_{1}, & R_{4}=48a^{2}t/R_{1}%
\end{array}%
,
\end{equation*}

\section{Matrix RHP on one cross\label{sec RHP on a cross}}

In this section we formulate (but not solve) a matrix RHP to find the
constant matrix $\mathbf{M}_{1}$ in $\left( \ref{m^2 asym}\right) $. Note
also that since each ray in the small cross $\Gamma _{+1}$ is of a fixed
length $\varepsilon $, the image of $\Gamma _{+1}$ under the change of
variable $z=\left( 48t\right) ^{1/2}\left( k-1\right) $ extends to infinity
as $t\rightarrow +\infty $. This prompts introducing a new infinite cross $%
\Gamma _{0}=\cup _{j=1}^{4}\Gamma _{j}$ where%
\begin{align*}
\Gamma _{1}& =\{u\mathrm{e}^{-\mathrm{i}\pi /4},\,u\in \lbrack 0,\infty )\}
& \Gamma _{2}& =\{u\mathrm{e}^{\mathrm{i}\pi /4},\,u\in \lbrack 0,\infty )\}
\\
\Gamma _{3}& =\{u\mathrm{e}^{3\mathrm{i}\pi /4},\,u\in \lbrack 0,\infty )\}
& \Gamma _{4}& =\{u\mathrm{e}^{-3\mathrm{i}\pi /4},\,u\in \lbrack 0,\infty
)\},
\end{align*}%
the enumeration of $\Gamma _{j}$ being chosen to match that of \cite{GT09}
for easy reference. Orient $\Gamma _{0}$ such that the real part of $z$
increases in the positive direction (see Figure~\ref{fig:contourcross}). 
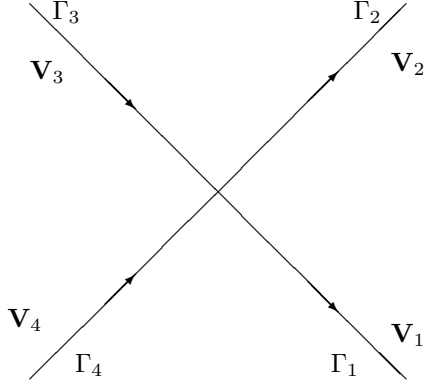
\begin{figure}[tbp]
\begin{picture}(7,5.2)
		
		\put(1,5){\line(1,-1){5}}
		\put(2,4){\vector(1,-1){0.4}}
		\put(4.7,1.3){\vector(1,-1){0.4}}
		\put(1,0){\line(1,1){5}}
		\put(2,1){\vector(1,1){0.4}}
		\put(4.7,3.7){\vector(1,1){0.4}}
		
		\put(5.0,0.1){$\Gamma_1$}
		\put(5.3,4.8){$\Gamma_2$}
		\put(1.3,4.8){$\Gamma_3$}
		\put(1.6,0.1){$\Gamma_4$}
		
		\put(5.8,0.5){$\mathbf{V}_{1}$}
		\put(5.8,4.1){$\mathbf{V}_{2}$}
		\put(1,4){$\mathbf{V}_{3}$}
		\put(0.7,.7){$\mathbf{V}_{4}$}
		
	\end{picture}
\caption{Contours of one cross}
\label{fig:contourcross}
\end{figure}
We are now ready to state our next RHP.

\textbf{RHP\#4 (Matrix RHP on one infinite cross)}. Find a $2\times2$ matrix
valued function \textbf{$M$}$^{\left( 4\right) }\left( z\right) $ which is
analytic away from $\Gamma_{0}$ and satisfies:

(1)\ The jump condition%
\begin{equation}
\mathbf{M}_{+}^{\left( 4\right) }\left( z\right) =\mathbf{M}_{-}^{\left(
4\right) }\left( z\right) \mathbf{V}^{\left( 4\right) }\left( z\right) \text{
for }z\in\Gamma_{0},
\end{equation}
where ($\mathbf{V}_{j}$'s are defined by $\left( \ref{Asympt of B}\right) $)%
\begin{equation}
\mathbf{V}^{\left( 4\right) }(z)=\mathbf{V}_{j}\left( z\right) ,z\in
\Gamma_{j},j=1,2,3,4;
\end{equation}

(2)\ The normalization condition%
\begin{equation*}
\mathbf{M}^{\left( 4\right) }\left( z\right) \sim \mathbf{I},z\rightarrow
\infty ,z\notin \Gamma _{0}.
\end{equation*}

Note that we still use consequently numbering of our jump matrices which
will be reset in the next section.

\section{Rescaling and second conjugation. Model RHP\label{sec model RHP}}

In this section we follow \cite{GT09} to reduce RHP\#4 to the one with the
jump on the real line.

Rewrite the jump matrices $\mathbf{V}_{j}$ as follows%
\begin{align}
\mathbf{V}_{1} & =%
\begin{pmatrix}
1 & -\overline{\rho}\varphi\left( z\right) ^{2}\mathrm{e}^{-\mathrm{i}%
z^{2}/2} \\ 
0 & 1%
\end{pmatrix}
, & \mathbf{V}_{2} & =%
\begin{pmatrix}
1 & 0 \\ 
\rho\mathrm{e}^{\mathrm{i}z^{2}/2}/\varphi\left( z\right) ^{2} & 1%
\end{pmatrix}
,  \notag \\
\mathbf{V}_{3} & =%
\begin{pmatrix}
1 & -\overline{\rho}\varphi\left( z\right) ^{2}\mathrm{e}^{-\mathrm{i}%
z^{2}/2}/\left( \varepsilon z^{2}\right) \\ 
0 & 1%
\end{pmatrix}
, & \mathbf{V}_{4} & =%
\begin{pmatrix}
1 & 0 \\ 
\rho\mathrm{e}^{\mathrm{i}z^{2}/2}/\left( \varepsilon\varphi\left( z\right)
^{2}z^{2}\right) & 1%
\end{pmatrix}
,
\end{align}
where%
\begin{equation*}
\rho=\mathrm{e}^{-16\mathrm{i}t}\dfrac{1-\mathrm{i}\kappa}{1+\mathrm{i}%
\kappa }A_{0}^{-2}\exp\dfrac{\mathrm{i}}{\pi}\left\{ \log a\log\left(
48t\right) +\frac{1}{4}\log^{2}\left( 48t\right) \right\} ,
\end{equation*}
is a unimodular number,%
\begin{equation*}
\varphi\left( z\right) =\exp\frac{\mathrm{i}}{2\pi}\left\{ \log\frac {1}{%
\varepsilon}\log z-\log^{2}z\right\} ,
\end{equation*}
and%
\begin{equation*}
\varepsilon=1/\left( 48a^{2}t\right)
\end{equation*}
is a small parameter as $t\rightarrow+\infty$.

We now conjugate RHP\#4 following \cite{kt} (see also \cite{DeiftZhou1993}
for more details)). To this end introduce the diagonal matrix%
\begin{equation*}
\mathbf{D}=\left( \varphi\left( z\right) \mathrm{e}^{-\mathrm{i}%
z^{2}/4}\right) ^{\sigma_{3}},
\end{equation*}
triangular matrices%
\begin{align}
\mathbf{D}_{1} & =%
\begin{pmatrix}
1 & \overline{\rho} \\ 
0 & 1%
\end{pmatrix}
, & \mathbf{D}_{2} & =%
\begin{pmatrix}
1 & 0 \\ 
\rho & 1%
\end{pmatrix}
,  \notag \\
\mathbf{D}_{3} & =%
\begin{pmatrix}
1 & -\overline{\rho}/\varepsilon z^{2} \\ 
0 & 1%
\end{pmatrix}
, & \mathbf{D}_{4} & =%
\begin{pmatrix}
1 & 0 \\ 
-\rho/\varepsilon z^{2} & 1%
\end{pmatrix}%
\end{align}
and four sectors%
\begin{align}
\Omega_{1} & =\left\{ z|\arg z\in\left( -\pi/4,0\right) \right\} , & 
\Omega_{2} & =\left\{ z|\arg z\in\left( 0,\pi/4\right) \right\} ,  \notag \\
\Omega_{3} & =\left\{ z|\arg z\in\left( 3\pi/4,\pi\right) \right\} , & 
\Omega_{4} & =\left\{ z|\arg z\in\left( -\pi,-3\pi/4\right) \right\} .
\end{align}
Consider the new matrix

\begin{equation}
\mathbf{M}\left( z\right) =\mathbf{M}^{\left( 4\right) }\left( z\right) 
\mathbf{D}\left( z\right) \left\{ 
\begin{array}{cc}
\mathbf{D}_{j}\left( z\right) , & z\in\Omega_{j} \\ 
\mathbf{I}, & \text{else}%
\end{array}
\right. .  \label{second conj}
\end{equation}
Arguing along the same lines as \cite{kt} (see also \cite{DeiftZhou1993} for
more detail) one can show that the substitution $\left( \ref{second conj}%
\right) $ removes the jumps across all $\Gamma_{j}$ but introduces a jump
across the real line instead. Indeed, since $\mathbf{M}^{\left( 4\right)
}\left( z\right) $ has no jump across the real line we have%
\begin{equation*}
\mathbf{M}_{+}\left( z\right) =\mathbf{M}^{\left( 4\right) }\left( z\right)
\left\{ 
\begin{array}{cc}
\mathbf{D}_{+}\left( z\right) \mathbf{D}_{3}\left( z\right) , & z<0 \\ 
\mathbf{D}_{+}\left( z\right) \mathbf{D}_{2}, & z>0%
\end{array}
\right. ,
\end{equation*}%
\begin{equation*}
\mathbf{M}_{-}\left( z\right) =\mathbf{M}^{\left( 4\right) }\left( z\right)
\left\{ 
\begin{array}{cc}
\mathbf{D}_{-}\left( z\right) \mathbf{D}_{4}\left( z\right) , & z<0 \\ 
\mathbf{D}_{-}\left( z\right) \mathbf{D}_{1}, & z>0%
\end{array}
\right. ,
\end{equation*}
and therefore%
\begin{equation*}
\mathbf{M}_{+}\left( z\right) =\mathbf{M}_{-}\left( z\right) \left\{ 
\begin{array}{cc}
\mathbf{D}_{4}\left( z\right) ^{-1}\mathbf{D}_{-}\left( z\right) ^{-1}%
\mathbf{D}_{+}\left( z\right) \mathbf{D}_{3}\left( z\right) , & z<0 \\ 
\mathbf{D}_{1}{}^{-1}\mathbf{D}_{-}\left( z\right) ^{-1}\mathbf{D}_{+}\left(
z\right) \mathbf{D}_{2}, & z>0%
\end{array}
\right. .
\end{equation*}
By a direct computation one shows%
\begin{equation*}
\mathbf{D}_{+}\left( z\right) =\mathbf{D}_{-}\left( z\right) \left\{ 
\begin{array}{cc}
\left( \varepsilon z^{2}\right) ^{\sigma_{3}}, & z<0 \\ 
\mathbf{I}, & z>0%
\end{array}
\right. ,
\end{equation*}
and for the jump matrix we explicitly have%
\begin{equation*}
\mathbf{J}\left( z\right) :=\left( 
\begin{array}{cc}
\varepsilon z^{2} & -\overline{\rho} \\ 
\rho & 0%
\end{array}
\right) ,z<0
\end{equation*}%
\begin{equation*}
\mathbf{J}\left( z\right) :=\left( 
\begin{array}{cc}
0 & -\overline{\rho} \\ 
\rho & 1%
\end{array}
\right) ,z>0.
\end{equation*}
This leads to a new RHP:

\textbf{RHP\#5 (Model RHP)}. Find a $2\times2$ matrix valued function 
\textbf{$M$}$\left( z\right) $ which is analytic on $\mathbb{C\diagdown R}$
and satisfies:

(1)\ The jump condition%
\begin{equation}
\mathbf{M}_{+}\left( z\right) =\mathbf{M}_{-}\left( z\right) \mathbf{J}%
\left( z\right) \text{ for }z\in\mathbb{R},
\end{equation}
where%
\begin{equation}
\mathbf{J}(z)=\left( 
\begin{array}{cc}
\varepsilon z^{2}\mathrm{1}_{\left( -\infty,0\right) } & -\overline{\rho} \\ 
\rho & \mathrm{1}_{\left( 0,\infty\right) }%
\end{array}
\right) ,z\in\mathbb{R};  \label{J}
\end{equation}

(2)\ The normalization condition%
\begin{equation}
\mathbf{M}\left( z\right) \sim \left( \varphi \left( z\right) \mathrm{e}^{-%
\mathrm{i}z^{2}/4}\right) ^{\sigma _{3}},z\rightarrow \infty ,\arg z\in
\left( \pi /4,3\pi /4\right) .  \label{cond 3}
\end{equation}

Just, we are back to the original contour, the real line, but as apposed to
RHP\#0 it is a matrix one now. The relation between RHP\#4 and RHP\#5 is
again given by $\left( \ref{second conj}\right) $.

Recall that in the classical case (i.e. under condition $\left( \ref{sr}%
\right) $) $\left\vert R\left( \pm1\right) \right\vert <1$, which leads to a
constant jump matrix, whereas in our case it is not. A similar situation was
recently considered in \cite{Budylin20} where the first diagonal entry of $%
\left( \ref{J}\right) $ vanishes at $z=0$ linearly. It was shown in \cite%
{Budylin20} that the corresponding RHP can in a certain sense be considered
as a perturbation of the model RHP with a constant jump matrix. This idea
works in our situation too.

\section{Additive RHP\label{sec add RHP}}

In this section we solve RHP\#5 by transforming it to an additive RHP that
is already solved in \cite{Budylin20}. For the reader's convenience we
sketch some steps. Following \cite{kt} (see also \cite{Budylin20},\cite%
{DeiftZhou1993}), introduce a new matrix%
\begin{equation}
\mathbf{N}\left( z\right) :=\left( \frac{\mathrm{d}\mathbf{M}}{\mathrm{d}z}%
\right) \mathbf{M}^{-1}+\frac{\mathrm{i}z}{2}\sigma _{3},  \label{N}
\end{equation}%
where $\mathbf{M}$ is given by $\left( \ref{second conj}\right) $. The
matrix $\mathbf{N}$ is clearly analytic away from $(-\infty ,0]$ and has a
jump across $(-\infty ,0]$. Omitting straightforward computations, one has
an additive RHP:%
\begin{align}
& \mathbf{N}_{+}\left( z\right) -\mathbf{N}_{-}\left( z\right)
\label{Add RHP} \\
& =\frac{2}{z}\mathbf{M}^{\left( 4\right) }\left( 
\begin{array}{cc}
1 & -\overline{\rho }\varphi \left( -z\right) ^{2}\mathrm{e}^{-\mathrm{i}%
z^{2}/2} \\ 
\rho \varphi \left( -z\right) ^{-2}\mathrm{e}^{\mathrm{i}z^{2}/2} & -1%
\end{array}%
\right) \left( \mathbf{M}^{\left( 4\right) }\right) ^{-1}\mathrm{1}_{\left(
-\infty ,0\right) }  \notag \\
& :=\mathbf{E}\left( z\right) ,\ \ \ z\in \mathbb{R},  \notag
\end{align}%
for $\mathbf{N}$ which is considered in \cite{Budylin20} (only some
coefficients are different). We have a less technical approach to $\left( %
\ref{Add RHP}\right) $ (it will be given elsewhere) but here we follow \cite%
{Budylin20}.\ Introducing an error $O\left( \varepsilon ^{1/2}\log
\varepsilon \right) $ we replace $\left( \ref{Add RHP}\right) $ with%
\begin{equation}
\mathcal{N}_{+}\left( z\right) -\mathcal{N}_{-}\left( z\right) =\mathcal{E}%
\left( z\right) ,  \label{Additive RHP}
\end{equation}%
where%
\begin{align}
\mathcal{E}\left( z\right) & =\frac{2}{z}\left( 
\begin{array}{cc}
1 & -\overline{\rho }\varphi \left( -z\right) ^{2}\mathrm{e}^{-\mathrm{i}%
z^{2}/2} \\ 
\rho \varphi \left( -z\right) ^{-2}\mathrm{e}^{\mathrm{i}z^{2}/2} & -1%
\end{array}%
\right) \mathrm{1}_{\left( -\infty ,0\right) }  \label{E} \\
& =\frac{2}{z}\mathrm{1}_{\left( -\infty ,0\right) }\sigma _{3}+\frac{2}{z}%
\left( 
\begin{array}{cc}
0 & -\overline{\rho }\varphi \left( -z\right) ^{2}\mathrm{e}^{-\mathrm{i}%
z^{2}/2} \\ 
\rho \varphi \left( -z\right) ^{-2}\mathrm{e}^{\mathrm{i}z^{2}/2} & 0%
\end{array}%
\right) \mathrm{1}_{\left( -\infty ,0\right) }.  \notag
\end{align}%
We then split $\mathcal{E=E}_{+}-\mathcal{E}_{-}$ where $\mathcal{E}_{\pm }$
and functions that can be analytically continued into $\mathbb{C}^{\pm }$ as
follows (recalling our notation $z_{\pm }=z\pm \mathrm{i}0$)%
\begin{align}
\mathcal{E}_{\pm }\left( z\right) & =-\frac{\mathrm{i}}{\pi }\frac{\log
z_{\pm }}{z}\sigma _{3}  \label{E+-} \\
& -\frac{\mathrm{i}}{z}\frac{1}{\pi }\int_{-\infty }^{0}\frac{z+\mathrm{i}}{%
s-z_{\pm }}\left( 
\begin{array}{cc}
0 & -\overline{\rho }\varphi \left( -s\right) ^{2}\mathrm{e}^{-\mathrm{i}%
s^{2}/2} \\ 
\rho \varphi \left( -s\right) ^{-2}\mathrm{e}^{\mathrm{i}s^{2}/2} & 0%
\end{array}%
\right) \frac{\mathrm{d}s}{s+\mathrm{i}}.  \notag
\end{align}

Substituting $\left( \ref{E+-}\right) $ in $\left( \ref{Additive RHP}\right) 
$ leads to the conclusion that the function $N_{+}\left( z\right) -\mathcal{E%
}_{+}\left( z\right) $ is analytic on $\mathbb{C}$ except for $z=0$ where it
may have a simple pole (this is due to discontinuity of $\mathbf{J}\left(
z\right) $ at $z=0$). That is, the matrix function%
\begin{align*}
& \left( \frac{\mathrm{d}\mathbf{M}}{\mathrm{d}z}\right) \mathbf{M}^{-1}+%
\frac{\mathrm{i}z}{2}\sigma _{3}+\frac{\mathrm{i}}{\pi }\frac{\log z}{z}%
\sigma _{3} \\
& +\frac{\mathrm{i}}{z}\int_{0}^{\infty }\frac{z+\mathrm{i}}{s+z}\frac{%
\mathbf{F}\left( s\right) }{s-\mathrm{i}}\frac{\mathrm{d}s}{\pi }+\frac{%
\mathbf{A}}{z},
\end{align*}%
where%
\begin{equation*}
\mathbf{F}\left( s\right) :=\left( 
\begin{array}{cc}
0 & -\overline{\rho }f\left( s\right) \\ 
\rho \overline{f\left( s\right) } & 0%
\end{array}%
\right) ,f\left( s\right) :=\varphi \left( s\right) ^{2}\mathrm{e}^{-\mathrm{%
i}s^{2}/2}
\end{equation*}%
is entire for some constant matrix $\mathbf{A}$. As in the classical case,
this leads to a first order matrix differential equation. The latter can be
rewritten as a system of second order ODEs that are perturbations of Bessel
equations. As opposed to the the classical case we cannot solve them
explicitly but can asymptotically, which leads to%
\begin{align}
\mathbf{M}\left( z\right) & \sim \left( \mathbf{I}+\dfrac{\mathrm{i}}{z}%
\left( 
\begin{array}{cc}
0 & -\beta \\ 
\overline{\beta } & 0%
\end{array}%
\right) \right) \left( \varphi \left( z\right) \mathrm{e}^{-\mathrm{i}%
z^{2}/4}\right) ^{\sigma _{3}},  \label{M asympt} \\
z& \rightarrow \infty ,\ \ \ \arg z\in \left( \pi /4,3\pi /4\right) ,\ \ \
t\rightarrow +\infty ,  \notag
\end{align}%
where%
\begin{equation}
\beta =\sqrt{\nu }\overline{\rho }\exp \mathrm{i}\left\{ \pi /4+\arg \Gamma
\left( \mathrm{i}\nu \right) \right\} .  \label{betta}
\end{equation}%
Here $\Gamma $ is the Euler gamma function, $\nu =\nu \left( \varepsilon
\right) =-\left( 1/2\pi \right) \log \varepsilon $, and $\varphi \left(
z\right) $ is given by%
\begin{equation*}
\varphi \left( z\right) =\exp \left\{ \mathrm{i}\nu \left( \varepsilon
\right) \log z+\frac{1}{2\pi \mathrm{i}}\log ^{2}z\right\} \text{.}
\end{equation*}

\section{Proof of the Main Theorem \label{sec proof}}

With all our preparation in order, to proof Theorem \ref{Thm on WvN} we
merely need to do a chain of back substitutions to $\left( \ref{q(x,t)}%
\right) $. The end product is available in \cite{GT09}:%
\begin{equation*}
\left. q\left( x,t\right) \right\vert _{x=-12t}\sim \sqrt{\frac{4}{3t}}\func{%
Im}\beta \left( t\right) ,\ \ \ t\rightarrow +\infty ,
\end{equation*}%
where $\beta $ is given by $\left( \ref{betta}\right) $:%
\begin{align*}
\beta \left( t\right) & =\sqrt{\nu \left( t\right) }\overline{\rho }\left(
t\right) \exp \mathrm{i}\left\{ \frac{\pi }{4}+\arg \Gamma \left( \mathrm{i}%
\nu \left( t\right) \right) \right\} \\
& \sim \sqrt{\nu \left( t\right) }\overline{\rho }\left( t\right) \exp 
\mathrm{i}\left( \nu \left( t\right) \log \nu \left( t\right) -\nu \left(
t\right) \right) ,t\rightarrow +\infty .
\end{align*}%
Here $\nu \left( t\right) =\left( 1/2\pi \right) \log \left( 48a^{2}t\right) 
$ and we have used the well-known asymptotic%
\begin{equation*}
\arg \Gamma \left( \mathrm{i}y\right) \sim y\log y-y-\pi /4,\ \ \
y\rightarrow +\infty .
\end{equation*}%
Since

\begin{equation*}
\overline{\rho }=\mathrm{e}^{16\mathrm{i}t}\dfrac{1+\mathrm{i}\kappa }{1-%
\mathrm{i}\kappa }A_{0}^{2}\exp \dfrac{1}{\mathrm{i}\pi }\left\{ \log a\log
\left( 48t\right) +\frac{1}{4}\log ^{2}\left( 48t\right) \right\} ,
\end{equation*}%
where $A_{0}$ is given by $\left( \ref{A_0}\right) $, it follows from $%
\left( \ref{betta}\right) $ that%
\begin{align*}
\left. q\left( x,t\right) \right\vert _{x=-12t}& \sim 2\left( 3t\right)
^{-1/2}\func{Im}\beta \left( t\right) \\
& =\sqrt{\frac{4\nu \left( t\right) }{3t}}\func{Im}\left\{ \overline{\rho
\left( t\right) }\exp \mathrm{i}\left[ \nu \left( t\right) \log \nu \left(
t\right) -\nu \left( t\right) \right] \right\} \\
& =\sqrt{\frac{4\nu \left( t\right) }{3t}}\sin \left\{ 16t-\nu \left(
t\right) \left[ 1+\pi \nu \left( t\right) -\log \nu \left( t\right) \right]
+\delta \right\} ,
\end{align*}%
where%
\begin{align*}
\delta & =\frac{2}{\pi }\int\limits_{0}^{1}\log \left\vert \frac{%
T(s)/T^{\prime }\left( 1\right) }{1-s}\right\vert ^{2}\frac{ds}{1-s^{2}}-%
\frac{2}{\pi }\int\limits_{0}^{1}\frac{\log s{\mathrm{d}s}}{s-2} \\
& +\frac{1}{\pi }\log \frac{\kappa ^{3}+\kappa }{2}\log \frac{\kappa
^{3}+\kappa }{8}-\frac{\pi }{3}+\arctan \frac{2\kappa }{1-\kappa ^{2}}.
\end{align*}%
Thus, the theorem is proven.

\section{Concluding remarks\label{sec concl}}

In this final section we make some remarks on what we have not done.

1. We state our main result for just one ray, $x=-12t$, and not for a wider
region as it is typically done. The main reason is that otherwise we would
not be able to rely directly on \cite{Budylin20}. We fill in this gap
elsewhere by offering an independent of \cite{Budylin20} approach.

2. We believe that Theorem \ref{Thm on WvN} holds for initial data of the
form $\left( \ref{our data}\right) $ but supported on $\left( 0,\infty
\right) $. However, RHP\#0 for this case is no longer regular since $%
m^{\left( +\right) }(k,x)$ has now a simple pole at $\pm 1$ which means that 
$\mathbf{m}(k,x,t)$ has a first order singularity and thus RHP\#0 is
singular. However this issue does not arise if in place of (\ref{R basic
scatt identity}) we use the left scattering identity%
\begin{equation}
T(k)\psi ^{\left( +\right) }(x,k)=\overline{\psi ^{\left( -\right) }(x,k)}%
+L(k)\psi ^{\left( -\right) }(x,k),\text{ }k\in \mathbb{R},
\label{left basic scat ident}
\end{equation}%
where $T$ is the same as above and $L$ is the left reflection coefficient.
But we know that $L=R$ in our case. As before $\left( \ref{left basic scat
ident}\right) $ leads to a new row vector%
\begin{equation}
\mathbf{m}(k,x,t)=\left\{ 
\begin{array}{c@{\quad}l}
\begin{pmatrix}
T(k)m^{\left( +\right) }(k,x,t) & m^{\left( -\right) }(k,x,t)%
\end{pmatrix}%
, & \func{Im}k>0, \\ 
\begin{pmatrix}
m^{\left( -\right) }(-k,x,t) & T(-k)m^{\left( +\right) }(-k,x,t)%
\end{pmatrix}%
, & \func{Im}k<0,%
\end{array}%
\right.
\end{equation}%
where, as before, $m^{\left( \pm \right) }(k;x,t)=$\textrm{$e$}$^{\mp 
\mathrm{i}kx}\psi ^{\left( \pm \right) }(x,t;k)$, which satisfies the jump
condition%
\begin{equation}
\mathbf{m}_{+}\left( k\right) =\mathbf{m}_{-}\left( k\right) \mathbf{V}%
\left( k\right) \text{ for }k\in \mathbb{R},
\end{equation}%
where%
\begin{equation}
\mathbf{V}\left( k\right) =\left( 
\begin{array}{cc}
1-\left\vert R\left( k\right) \right\vert ^{2} & -\overline{R}\left(
k\right) \xi \left( k\right) ^{2} \\ 
R\left( k\right) \xi \left( k\right) ^{-2} & 1%
\end{array}%
\right) .  \label{jump matrix left}
\end{equation}%
Note that the only difference between $\left( \ref{jump matrix left}\right) $
and $\left( \ref{jump matrix}\right) $ is the sign of the power of $\xi
\left( k\right) $ which is the opposite. The well-posedness of the new RHP\
is a more complicated question but it does hold. The next difference is of
course the signature diagram (Figure 1)\ where all signs change to the
opposite which leads to a different conjugation step. Instead of $T_{0}$ we
conjugate our RHP with $T/T_{0}$ and in place of $\left( \ref{B -}\right) $
and $\left( \ref{B+}\right) $ we have

\begin{equation*}
\mathbf{B}_{-}(k)=\left\{ 
\begin{array}{ccc}
\begin{pmatrix}
1 & 0 \\ 
-\dfrac{R(k)T_{0-}(k)^{2}\xi\left( k\right) ^{-2}}{T\left( k\right)
^{2}T\left( k\right) T\left( -k\right) } & 1%
\end{pmatrix}
& , & k\in\mathbb{R}\backslash\left[ -1,1\right] \\ 
\begin{pmatrix}
1 & \dfrac{R(-k)T\left( k\right) ^{2}}{T_{0}\left( k\right) ^{2}\xi\left(
k\right) ^{-2}} \\ 
0 & 1%
\end{pmatrix}
& , & k\in\left[ -1,1\right]%
\end{array}
\right. ,
\end{equation*}%
\begin{equation*}
\mathbf{B}_{+}(k)=\left\{ 
\begin{array}{ccc}
\begin{pmatrix}
1 & -\dfrac{T\left( k\right) ^{2}R(-k)\xi\left( k\right) ^{2}}{T_{0+}\left(
k\right) ^{2}T\left( k\right) T\left( -k\right) } \\ 
0 & 1%
\end{pmatrix}
& , & k\in\mathbb{R}\backslash\left[ -1,1\right] \\ 
\begin{pmatrix}
1 & 0 \\ 
\dfrac{R(k)T_{0}\left( k\right) ^{2}\xi\left( k\right) ^{-2}}{T\left(
k\right) ^{2}} & 1%
\end{pmatrix}
& , & k\in\left[ -1,1\right]%
\end{array}
\right. .
\end{equation*}
Continuing the same way as we did above, we get to a model RHP with a more
complicated jump matrix. We plan to come back to it elsewhere.

3. The general case of WvN resonances (i.e. $q$'s are of the form $\left( %
\ref{pure WvN}\right) $ with arbitrary $\gamma $) hinges on understanding
the zero energy behavior of scattering data. Note that this problem was
first explicitly mentioned in \cite{Klaus91}. The paper \cite{Yafaev81}
suggests that this is a very difficult problem in general. We should
emphasize in this connection that as apposed to the short-range scattering
theory, its WvN type counterpart is not understood to the level suitable to
do the IST. It is to say that there is no "one stop shopping" like \cite%
{Deift79} or \cite{March86} available for facts related to long-range
direct/inverse scattering.

\section{Acknowledgment}

The author acknowledges partial support from NSF under grant DMS-2307774. We
are also thankful to Deniz Bilman for stimulating discussions and the
referees for valuable comments.

\end{document}